\numberwithin{figure}{section}
\tikzstyle directed=[postaction={decorate,decoration={markings,
    mark=at position .65 with {\arrow{stealth}}}}]
\tikzstyle reverse directed=[postaction={decorate,decoration={markings,
    mark=at position .65 with {\arrowreversed{stealth};}}}]
\newtheorem{theorem}{Theorem}[section]    
\newtheorem*{tma1}{Theorem \ref{D1o3}}  
\newtheorem*{tma2}{Theorem \ref{action}} 
\newtheorem*{tma3}{Theorem \ref{orbitas}} 
\newtheorem*{cor1}{Corollary \ref{AlgD}} 
\newtheorem{prop}[theorem]{Proposition} \newtheorem{lemma}[theorem]{Lemma}
\newtheorem{corollary}[theorem]{Corollary}
\newtheorem{remark}[theorem]{Remark}
\newtheorem{exa}[theorem]{Example}
\def\aut {\mathop {\rm Aut}\nolimits}
\def\sym {\mathop {\rm Sym}\nolimits}
\def\AA {\mathcal{A}}
\def\DD {\mathcal{D}}
\def\KK {\mathbb{K}}
\title[ Group actions and Automorphisms of  evolution algebras ]{Group actions and Automorphisms of  evolution algebras associated to finite graphs \\ 
}
\author[Mary Luz Rodi\~no Montoya]{Mary Luz Rodi\~no Montoya}
\address{Mary Luz Rodi\~no Montoya: Instituto de Matem\'aticas - Universidad de Antioquia, Calle 67 N$^{\circ}$ 53-108, Medell\'in, Colombia}
\email{mary.rodino@udea.edu.co}
\author[Natalia A. Viana Bedoya]{Natalia A. Viana Bedoya}
\address{Natalia A. Viana Bedoya: Departamento de Matemática - Universidade Federal de S\~ao Carlos, Rod. Washington Luíz, s/n - Monjolinho, São Carlos - SP, Brazil.}
\email{nvbedoya@ufscar.br}
\author[Carlos Henao]{Carlos Henao}
\address{Carlos Henao: Instituto de Matem\'aticas - Universidad de Antioquia, Calle 67 N$^{\circ}$ 53-108, Medell\'in, Colombia  }
\email{carlos.henao1@udea.edu.co}
\subjclass[2020]{17A36, 05C25, 05C81, 05E18}
\keywords{Group actions, Automorphisms, Evolution algebras, Graphs.} 
\begin{document}
\begin{abstract} Given an  evolution algebra associated to a connected  finite graph $\Gamma$, we 
exhibit  a free action of the group of symmetries of $\Gamma$ on the set of automorphisms of the algebra.
This allows us to explicitly describe this set and we
prove that a sufficient condition for it  to be finite  is that every automorphism  is induced by a graph symmetry. Consequently, we extend
a known result about perfect evolution algebras to other families.  
\end{abstract}   
\maketitle
\section*{Introduction}
  In this work  we  describe the set of automorphisms of an  evolution algebra, associated to a finite, connected,  undirected graph, as the disjoint union of copies of the group of symmetries of the graph. This allows us to recognize evolution algebras
whose automorphism group is finite with a weaker hypothesis than that of being perfect or having a unique basis.
 
 Evolution algebras    were introduced   by J. P. Tian  in \cite{tian1, tian2} 
 as an algebraic way to model the self-reproduction of alleles in non-Mendelian genetics. Since then, many works have been dedicated to understand their algebraic structures. 
Formally, a  {\it  finite-dimensional evolution algebra} $\AA$ is  a $\KK$-algebra,  $\KK$ a field,  that admits a finite basis $B:=\{e_i \}_{i=1}^n$ such that $e_i^2 = \sum_{k=1}^n c_{ik} e_k$, for $c_{ik} \in \KK$  and  $e_ie_j=0$ if $i \neq j$. A basis with this property is a {\it natural basis for $\AA$}. The scalars $c_{ik} \in \KK$ are  the {\it  structure constants of  $\AA$ relative to $B$} and the matrix $M_{B} = (c_{ik})$ is  {\it the
structure matrix   of $\AA$ relative to $B$.}

Given a  finite graph $\Gamma$, 
{\it the finite-dimensional evolution algebra associated to} $\Gamma$ is the evolution $\mathbb{K}$-algebra, denoted by   $\AA_{\Gamma}$,  such that its structure matrix  is the adjacency matrix of $\Gamma$.  There is also   the {\it evolution algebra associated  to the symmetric random walk (SRW) on} $\Gamma$, denoted by  $\AA_{RW(\Gamma)}$, that is the evolution $\mathbb{K}$-algebra with structure matrix equal to   the  transition matrix of $\Gamma$.

 Finite-dimensional evolution algebras associated to  graphs      have been extensively studied under the hypothesis of idempotency: $\AA=\AA^2$ ,  see for example   \cite{TYCC,PMP, CLTV,Eld-Lab2015, Eld-Lab2021, Sri-Zou2022}. This hypothesis defines the class of {\it perfect evolution algebras}, also called regular or idempotent.
 In this work, we will deal with the evolution algebras in 
 $\{\AA_{\Gamma}, \AA_{RW(\Gamma)}\}_{\Gamma \in \mathscr{G}}$,  where $\mathscr{G}$ is the set of finite, connected, simple,  undirected graph  without loops. In particular, this class of algebras is  not contained in the class of  perfect evolution algebras.

In  \cite{Eld-Lab2015, Eld-Lab2021}, A. Elduque and  A. Labra  show  that the diagonal subgroup of the automorphism group of a perfect evolution algebra is isomorphic to diagonal group of the associated graph.  In this work, for $\AA \in 
  \{\AA_{\Gamma},\AA_{RW(\Gamma)}\}_{\Gamma \in \mathscr{G}}$, we show (see Proposition \ref{eqaybDiag}) that  these groups are still isomorphic even if $\AA$ is  not perfect. As a consequence we obtain  for $\AA \in 
  \{\AA_{\Gamma},\AA_{RW(\Gamma)}\}_{\Gamma \in \mathscr{G}}$, if $ \aut (\AA)$ denotes its    automorphisms group, that:

  \begin{tma1}\label{teo1} If $\AA$ is an evolution $\mathbb{K}$-algebra for $\mathbb{K}$ algebraically closed and $\mathcal{D}$ is the diagonal subgroup of $ \aut (\AA)$, then
 $\DD \neq \{id\}$  if and only if  $\Gamma$  is  bipartite.
\end{tma1}

\begin{cor1}
      Under the hypothesis  of Theorem \ref{D1o3}, either $\DD=\{Id\}$  or  $\DD \cong \mathbb{Z}_3$.  
      \end{cor1}
      
   If $\sym(\Gamma)$ denotes {\it the group of symmetries of $\Gamma$}, our main result for $\AA \in 
\{\AA_{\Gamma},\AA_{RW(\Gamma)}\}_{\Gamma \in \mathscr{G}}$ is: 
  \begin{tma2}
$ \sym(\Gamma)$ acts freely on  $ \aut (\AA)$ and  the orbits  are isomorphic to $\sym(\Gamma)$.
\end{tma2}
Also in  \cite{Eld-Lab2015, Eld-Lab2021}, they  show  that the automorphism group of a perfect evolution algebra is finite. This result was revisited  with different approaches  in \cite{CLTV},  in terms of regular evolution algebras,   and in
  \cite{Sri-Zou2022},   in terms of idempotent evolution algebras. 
   In this work we extend it to other families in $\{\AA_{\Gamma}, \AA_{RW(\Gamma)}\}_{\Gamma \in \mathscr{G}}$,  via the action in Theorem \ref{action}.     Specifically, we extend the result to the family of evolution algebras whose automorphisms are all induced by
symmetries of the associated graph, see \S \ref{acts}.
This family
  strictly contains the family of  those  that are  non-degenerate and   with a unique basis, i.e.,  evolution algebras with natural basis $\{e_i\}_{i=1}^n$ satisfying the {\it 2LI property}: $\{e_i^2,e_j^2\}$ is linearly independent for $i \neq j$,  (see \cite{BYM, Eld-Lab2021, CH}). In particular, it contains the family of those that are perfect. This will be shown in Example \ref{ex:indper} and  Example \ref{GraphT41} using evolution algebras that   are  not perfect, non-degenerate or {\it 2LI}, but whose automorphisms are all induced by symmetries of the respective graphs. This further extends the scope of the following theorem. 
 
 Denoting by $O_{f}$  the {\it orbit} of $f \in \aut(\AA)$  by the action, we state:

\begin{tma3}
 Let $\DD \leq \aut(\AA)$ be the diagonal subgroup. If every element of $\aut(\AA)$  is induced by a symmetry of $\Gamma$ then $\aut(\AA)= \bigsqcup_{f\in \DD} O_f.$ Moreover,  $\aut(\AA)$ is finite and
    $|\aut(\AA)|=|\DD||\sym(\Gamma)|$.
\end{tma3}

The paper is divided into three sections.
In  \S \ref{Prem}, we set the basics of definitions, notation and terminology.
Section  \S \ref{Res} is the main section, it is devoted to show the principal results.
 In \S \ref{SRW}, we point out some applications of the results in \S \ref{Res}, to  relate the group of automorphisms of $\AA_{\Gamma}$ and $ \AA_{RW(\Gamma)}$.

\section{Preliminary definitions and notation}\label{Prem}
Let $\KK$ be  an algebraically closed field  with zero characteristic and
 $\AA$  a finite-dimensional evolution $\KK$-algebra with natural basis $B=\{e_i \}_{i=1}^n$ and structure matrix $M_B$. We will  say that $\AA$ is    {\it perfect} if and only if  ${\rm det}M_B \neq 0$ (see \cite{Eld-Lab2015}, Proposition 4.2), and that  $\AA$ is  {\it non-degenerate} if $e_i^2 \neq 0$ for all $i$. Perfect evolution algebras are non-degenerate (see \cite{TYCC}).

We will denote by $\Gamma=(V,E)$  a {\it finite undirected graph}, with vertex set $V=\{1, \dots, n\}$  and edge set $E \subseteq \{\{i,j\} | i,j \in V\}$. The {\it adjacency matrix} of the graph $\Gamma$ is a symmetrical matrix  $A=(a_{ij})$ made up of zeros and ones, with $a_{ij}=1$ if $\{i,j\} \in E$, and $a_{ij}=0$ in other case.
We say that $\Gamma=(V,E')$ is a bi-directed graph if  $E':= \{(i,j),(j,i) \, | \,  \{i,j\} \in E\}$ is the edge set. Notice that the adjacency matrix for $\Gamma$ undirected or bi-directed will be the same.
The {\it transition matrix} of the graph $\Gamma$ is a matrix  $P=(p_{ij})$ with $p_{ij}=\frac{1}{d_i}$ if $\{i,j\}\in E$, and $p_{ij}=0$ in other case, where $d_i$ is the {\it degree} of the vertex $i$, i.e., the number of edges that are incident to the vertex $i$. 

\begin{exa} \label{ex:p3}   
Let $P_3$ denote the path graph  with three vertices. The matrices
$$\begin{pmatrix}
    0 & 1 & 0 \\
    1 & 0 & 1 \\
    0 & 1 & 0
\end{pmatrix} \hspace{0.5cm} \textrm{and} \hspace{0.5cm} \begin{pmatrix}
    0 & 1 & 0 \\
    \frac{1}{2} & 0 & \frac{1}{2} \\
    0 & 1 & 0
\end{pmatrix}$$ are respectively the adjacency and the transition matrices,   when we represent $P_3$ either by an undirected graph  or by a bi-directed graph (see Fig. \ref{FIG:P3}).
\begin{figure}[!h]\label{P3}
\begin{center}
\subfigure[The undirected path graph $P_{3}$.]{\begin{tikzpicture}[scale=0.4]
\draw (3.9,0) node[above,font=\footnotesize] {$1$} -- (6.9,0);
\draw (6.9,0) node[above,font=\footnotesize] {$2$} -- (9.9,0) node[above,font=\footnotesize] {$3$};
\filldraw [black] (3.9,0) circle (3.5pt);
\filldraw [black] (6.9,0) circle (3.5pt);
\filldraw [black] (9.9,0) circle (3.5pt);
\hspace{-1.2cm}
\end{tikzpicture}
\label{FIG:P3(a)}}\qquad \qquad\subfigure[The bi-directed path graph $P_{3}$.]{ \begin{tikzpicture}[scale=0.4]
\draw [thick, directed]  (13.6,0.3) to (16.5,0.3);
\draw [thick, directed]  (16.5,-0.3) to (13.6,-0.3);
\draw [thick, directed]  (17.5,0.3) to (20.5,0.3);
\draw [thick, directed]  (20.5,-0.3) to (17.5,-0.3);
\draw (13,0) node[above,font=\footnotesize] {$1$} (17.5,0);
\draw (17,0) node[above,font=\footnotesize] {$2$} (21,0) node[above,font=\footnotesize] {$3$};
\filldraw [black] (13,0) circle (3.5pt);
\filldraw [black] (17,0) circle (3.5pt);
\filldraw [black] (21,0) circle (3.5pt);
\hspace{-1.2cm}
\end{tikzpicture}
\label{FIG:P3(b)}}
\caption{ Path graph $P_3$}\label{FIG:P3}
\end{center}
\end{figure}
\end{exa} 
From now on, the graph $\Gamma$ can denote both, an undirected graph or a bidirected graph. Also, all the graphs we consider in this work are simple, connected, and without loops. If $\pi$ is an edge path  between the vertices $i$ and $j$,  define the {\it length} of $\pi$, and denote it by $l(\pi)$, the number of edges in the path $\pi$. Define the {\it distance} between $i$ and $j$ by: 
\begin{equation}\label{distance}
d(i,j):=min\{l(\pi)| \pi \textrm{ is a path between $i$ and $j$} \}, 
\end{equation}
and set $d(i,i):=0.$
A {\it cycle} in $\Gamma$ is a path that starts and ends at the same vertex.

The  {\it evolution algebra}  $\AA_{\Gamma}$ {\it associated  to the graph} $\Gamma$ is the evolution $\mathbb{K}$-algebra  with natural basis $B=\{e_i\}_{i=1}^n$ and structure matrix $M_B$ equal to the adjacency matrix $A=(a_{ij})$ of $\Gamma$, i.e., with  relations $e_i^2 = \sum_{k=1}^n a_{ik} e_k, \, \text{for  } i=1, \dots, n$, $a_{ik} \in \mathbb{K}$, and $e_ie_j=0$ for $i \neq j$. 

\begin{exa}\label{ex:1P3} 
 The evolution $\mathbb{C}$-algebra  $\AA_{P_3}$ associated to $P_3$  graph (see Fig. \ref{FIG:P3}) has natural basis $B=\{e_1,e_2,e_3\}$, non-zero products $e_1^2=e_2, \, e_2^2=e_1+e_3, \, e_3^2=e_2$ and structure matrix: 
\begin{equation*}
M_B= \begin{pmatrix}
    0 & 1 & 0 \\
    1 & 0 & 1 \\
    0 & 1 & 0
\end{pmatrix}.
\end{equation*}
\end{exa}

The {\it evolution algebra associated  to the symmetric random walk (SRW) on} $\Gamma$ is the evolution $\mathbb{K}$-algebra, denoted by  $\AA_{RW(\Gamma)}$,  with natural basis $B=\{e_i\}_{i=1}^n$ and  structure matrix  $M_B$ equal to the transition matrix $P=(p_{ij})$ of $\Gamma$, i.e.,
 with  relations $e_i^2=\sum_{k=1}^n p_{ik} e_k, \, \text{for  } i=1, \dots, n$, $p_{ik} \in \mathbb{K}$, and $e_ie_j=0$ for $i \neq j$. 

 \begin{exa}\label{ex:SRW}
The evolution $\mathbb{C}$-algebra  $\AA_{RW(P_3)}$ associated  to the symmetric random walk on $P_3$ graph (see Fig. \ref{FIG:P3}) has natural basis $B=\{e_1,e_2,e_3\}$, non-zero products  $e_1^2=e_2, \, e_2^2=\frac{1}{2}(e_1+e_3), \, e_3^2=e_2$ and structure matrix: 
\begin{equation*}
M_B= \begin{pmatrix}
    0 & 1 & 0 \\
    \frac{1}{2} & 0 & \frac{1}{2} \\
    0 & 1 & 0
\end{pmatrix}.
\end{equation*}
\end{exa}
For $i,j \in V$, we say that $i$ is a {\it neighbor} of $j$, and we write $i \in \mathcal{N}(j)$, if $\{i,j\}\in E$. Thus, 
\begin{equation*} \label{eq:vec}
i \in \mathcal{N}(j) \textrm{ if and only if } j \in \mathcal{N}(i).
\end{equation*}
Since $\Gamma$ is connected, the evolution algebras $\AA_{\Gamma}$ and $\AA_{RW(\Gamma)}$,  with natural basis $B=\{e_i\}_{i=1}^n$,  are non-degenerate and  we have respectively: 
\begin{equation}\label{e2-grafo} e_i^2 = \sum \limits _{l \in \mathcal{N}(i)} e_l \, \hspace{0.3cm} \text{ and } \hspace{0.3cm} e_i^2=\frac{1}{d_i} \sum \limits _{l \in \mathcal{N}(i)} e_l, \, \hspace{0.3cm} \text{for  } i=1, \dots, n,
\end{equation} where $d_i$ is the  degree of the vertex $i$.

A permutation $\sigma \in S_n$ is a {\it symmetry} of $\Gamma$ if
\begin{equation}\label{symgraph}
i \in \mathcal{N}(j) \textrm{ if and only if } \sigma(i) \in \mathcal{N}(\sigma(j)),
\end{equation} 
we  denote by $\sym (\Gamma)$ the {\it symmetry group of} $\Gamma$ .

For an evolution algebra $\mathcal{A}$, we will denote by $\mathcal{L}(\mathcal{A})$ the set of linear transformations of $\mathcal{A}$ into $\mathcal{A}$,   and its  {\it automorphism group} by $\aut (\AA)$, i.e.,  
 $$\aut (\AA):=\{ f \in \mathcal{L}(\mathcal{A}) \, \, \big| \, f \, \textrm{is isomorphism of evolutions $\mathbb{K}$-algebras}\}.$$ If $B$ is a natural basis, we will denote by $[f]_{B}$ the {\it representation matrix of} $f$ {\it in the basis} $B$. Thus, if $f \in \aut (\AA)$ then $det([f]_{B}) \neq 0$ and the following condition holds for all $e_i,e_j \in B$: 
\begin{equation}\label{eq:condf}
f(e_i e_j) = f(e_i) f(e_j).\\[.5cm]
\end{equation}

\section{Results}\label{Res}
Henceforth, we will fix the following notation:
 $\Gamma=(V,E)$ for an  undirected, simple, connected and  without loops finite graph with $n$ vertices,
 $\AA \in \{\AA_{\Gamma},\AA_{RW(\Gamma)}\}$ for  the associated evolution $\mathbb{K}$-algebras
with natural basis $B=\{e_i\}_{i=1}^n$ and structure matrix $M_B$.
Since $M_B$ can be singular, as seen in  examples in \S \ref{Prem},  the following results  also apply to cases where $\AA$ is not perfect. 
\subsection{Properties of  automorphisms of evolution algebras associated to graphs}\label{both}
This section is dedicated to describe the properties of the matrix entries of an automorphism, on a natural basis. They are a consequence of specific linear combinations, which is the subject of the following lemma. 

\begin{lemma}\label{lem:intsum1}
Let $B=\{e_i\}_{i=1}^n$ be the natural basis for $\AA \in \{\AA_{\Gamma},\AA_{RW(\Gamma)}\}$.  
The following equalities hold:
\begin{enumerate} [noitemsep, leftmargin=20pt]
\item \label{sum1} for $i,\ell,k \in V$ and  $a_{k},b_{k} \in \mathbb{K}$:
\begin{equation}\label{lem:int1}
\sum_{k=1}^{n} b_{k} \left( \sum_{\ell \in \mathcal{N}(k)} a_{k} e_{\ell}\right)= \sum_{i=1}^{n}  \left( \sum_{k \in \mathcal{N}(i)} a_{k} b_{k} \right)e_{i},
\end{equation}
\item \label{sum2} for $i,\ell,k \in V$, $a_i,b_{\ell k} \in \KK$ and $\mathcal{N}(i)=\{i_{1}, i_{2}, \dots, i_{t_i}\} \subseteq V$
\begin{equation}\label{lem:int2}
\sum_{\ell \in \mathcal{N}(i)} a_{i} \left( \sum_{k=1}^{n} b_{\ell k} e_{k}\right)= \sum_{k=1}^{n}  \left( \sum_{\ell \in \mathcal{N}(i)} a_{i}b_{\ell k}  \right)e_{k}.
\end{equation}
\end{enumerate}
\end{lemma}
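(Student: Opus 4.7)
The plan is to prove both identities by interchanging the order of summation, which is valid here because all sums are finite. The only non-trivial ingredient is needed for part (1), namely the symmetry of the neighbor relation $\mathcal{N}$, which holds because $\Gamma$ is undirected.

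For part (1), I would first rewrite the left-hand side by pulling the constant $b_k$ inside (since it does not depend on $\ell$) to obtain
\begin{equation*}
\sum_{k=1}^{n} b_{k} \!\sum_{\ell \in \mathcal{N}(k)}\! a_{k}\, e_{\ell}
\;=\;\sum_{k=1}^{n}\sum_{\ell \in \mathcal{N}(k)} a_{k} b_{k}\, e_{\ell}.
\end{equation*}
I would then view this as a single sum over the set of ordered pairs $\{(k,\ell)\in V\times V : \ell\in\mathcal{N}(k)\}$. Since $\Gamma$ is undirected, we have the equivalence $\ell\in\mathcal{N}(k)\Longleftrightarrow k\in\mathcal{N}(\ell)$, so that this index set coincides with $\{(k,\ell) : k\in\mathcal{N}(\ell)\}$. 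Grouping terms by the outer index $\ell$, and renaming $\ell$ as $i$, produces
\begin{equation*}
\sum_{i=1}^{n}\!\left(\sum_{k \in \mathcal{N}(i)} a_{k} b_{k}\right)\! e_{i},
\end{equation*}
which is the right-hand side.

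For part (2), the argument is even more direct: since $a_i$ does not depend on $\ell$ or $k$, and the two sums are finite and indexed independently, I would simply apply the finite Fubini principle to swap $\sum_{\ell\in\mathcal{N}(i)}$ and $\sum_{k=1}^{n}$. Factoring the vector $e_k$ out of the inner sum then yields the stated identity.

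I do not expect any real obstacle here; the lemma is a bookkeeping statement. The only subtle point to flag clearly in the write-up is the use of the symmetry $\ell\in\mathcal{N}(k)\Longleftrightarrow k\in\mathcal{N}(\ell)$ in part (1), since without it the two sides would index different pairs. Both identities could be stated as a single Fubini lemma, but keeping them separate is convenient for later use in the forthcoming computations with $f(e_i^2)=f(e_i)^2$.
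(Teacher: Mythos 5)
Your proof is correct and follows essentially the same route as the paper's: both arguments amount to regrouping the double sum by the index of the basis vector, which is exactly your finite Fubini interchange, with the paper writing the terms out explicitly rather than invoking the principle by name. Your explicit appeal to the symmetry $\ell\in\mathcal{N}(k)\Leftrightarrow k\in\mathcal{N}(\ell)$ in part (1) is a welcome clarification of a step the paper leaves implicit in its regrouping.
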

\begin{proof}
For {\it(1)} consider $k \in V$ and $a_{k},b_{k} \in \mathbb{K}$, then  
\begin{equation*}
\sum_{k=1}^{n} b_{k} \left( \sum_{\ell \in \mathcal{N}(k)} a_{k} e_{\ell}\right)= b_{1} \left( \sum_{\ell \in \mathcal{N}(1)} a_{1} e_{\ell}\right)+b_{2} \left(\sum_{\ell \in \mathcal{N}(2)} a_{2} e_{\ell}\right)+\dots +b_{n} \left( \sum_{\ell \in \mathcal{N}(n)} a_{n} e_{\ell}\right)
\end{equation*}
\begin{equation*}
\hspace{3.3cm} =  \left( \sum_{k \in \mathcal{N}(1)} b_{k}a_{k} \right)e_{1}+ \left( \sum_{k \in \mathcal{N}(2)} b_{k}a_{k} \right)e_{2}+ \dots + \left( \sum_{k \in \mathcal{N}(n)} b_{k}a_{k} \right)e_{n}
\end{equation*}\vspace{0.1cm}
\begin{equation*}
\hspace{-3.1cm} =   \sum_{i=1}^{n} \left( \sum_{k \in \mathcal{N}(i)} b_{k}a_{k} \right)e_{i}.
\end{equation*}
For {\it(2)} consider $i,\ell,k \in V$ and $a_i,b_{\ell k} \in \KK$. If $\mathcal{N}(i)=\{ i_{1}, \dots, i_{t_i}\}$ then,  
\begin{equation*}
\sum_{\ell \in \mathcal{N}(i)} a_{i} \left( \sum_{k=1}^{n} b_{\ell k} e_{k}\right)= a_{i} \left(\sum_{k=1}^{n} b_{i_1 k} e_{k} \right)+ a_{i}\left(\sum_{k=1}^{n} b_{i_{2} k} e_{k} \right)+ \dots + a_{i} \left( \sum_{k=1}^{n} b_{i_{t_i} k} e_{k} \right)
\end{equation*}
\begin{equation*}
\hspace{4.5cm} =  a_{i} \left( \sum_{\ell \in \mathcal{N}(i)} b_{\ell 1} \right)e_{1}+a_{i} \left( \sum_{\ell \in \mathcal{N}(i)} b_{\ell 2} \right)e_{2}+\dots +a_{i} \left(\sum_{\ell \in \mathcal{N}(i)} b_{\ell n} \right)e_{n}
\end{equation*}
\begin{equation*}
\hspace{-2.1cm} =   \sum_{k=1}^{n} a_{i} \left( \sum_{\ell \in \mathcal{N}(i)} b_{\ell k} \right)e_{k}. 
\end{equation*}
\end{proof}

\begin{remark}\label{rem:int}
Notice that if $\sigma \in S_n$, then \eqref{lem:int1} and  \eqref{lem:int2} are fulfilled as follows, respectively
\begin{equation}\label{rem:int1}
\sum_{k=1}^{n} b_{k} \left( \sum_{\ell \in \mathcal{N}(k)} a_{k} e_{\sigma(\ell)}\right)= \sum_{i=1}^{n}  \left( \sum_{k \in \mathcal{N}(i)} a_{k} b_{k} \right)e_{\sigma(i)} \vspace{0.3cm}
\end{equation}
and 
\begin{equation}\label{rem:int2}
\sum_{\ell \in \mathcal{N}(i)} a_{i} \left(\sum_{k=1}^{n} b_{\ell k} e_{\sigma(k)}\right)= \sum_{k=1}^{n}  \left( \sum_{\ell \in \mathcal{N}(i)} a_{i}b_{\ell k}  \right)e_{\sigma(k)}.
\end{equation}
\end{remark} \vspace{0.1cm}

\begin{prop}\label{eqayba1a1} 
Let $B=\{e_i\}_{i=1}^n$ be the natural basis for $\AA_{\Gamma}$ or $\AA_{RW(\Gamma)}$, $i,k\in V$ and  $t_{ik} \in \KK$.
\begin{enumerate}[noitemsep, leftmargin=20pt]
\item If $f \in \aut (\AA_{\Gamma})$ is such that 
$f(e_i) =\sum_{k=1}^n t_{ik}e_k$ then,\vspace{0.2cm}
\begin{equation}\label{eq:propa1}
\sum_{k\in \mathcal{N}(r)}t_{ik}t_{jk} =0,\text{ for all }r,i,j\in V\text{ such that }i\neq j,
\end{equation}
and
\begin{equation}\label{eq:propb1}
\sum_{k\in \mathcal{N}(r)} t_{ik}^2 = \sum_{\ell \in \mathcal{N}(i)}t_{\ell r},\text{ for all }i,r\in V.
\end{equation}\vspace{0.2cm}
\item If $f \in \aut (\AA_{RW(\Gamma)})$ such that 
$f(e_i) =\sum_{k=1}^n t_{ik}e_k$  then,\vspace{0.2cm}
\begin{equation}\label{RWeq:propa2}
\sum_{k\in \mathcal{N}(r)} \frac{1}{d_k}t_{ik}t_{jk} =0,\text{ for all }r\in V\text{ and }i,j\in V\text{ such that }i\neq j,
\end{equation}
and
\begin{equation}\label{RWeq:propb2}
\sum_{k\in \mathcal{N}(r)} \frac{1}{d_k}t_{ik}^2 = \sum_{\ell \in \mathcal{N}(i)} \frac{1}{d_i}t_{\ell r},\text{ for all }i,r\in V.
\end{equation}\vspace{0.2cm}
\end{enumerate}
\end{prop}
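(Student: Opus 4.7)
The plan is to derive the four identities directly from the defining property \eqref{eq:condf} of an automorphism, by expanding $f(e_i e_j)$ and $f(e_i)f(e_j)$ in the natural basis $B$ and then reading off coefficients. Lemma \ref{lem:intsum1} is custom-built for exactly this re-indexing, so the argument is essentially a two-line computation followed by an application of \eqref{lem:int1}.

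For part (1), I fix $f \in \aut(\AA_\Gamma)$ with $f(e_i)=\sum_k t_{ik}e_k$. I first take $i \neq j$: then $e_ie_j = 0$, so $f(e_i)f(e_j)=0$. Using $e_ke_m=0$ for $k\neq m$ together with \eqref{e2-grafo},
\[
f(e_i)f(e_j)=\sum_{k=1}^n t_{ik}t_{jk}\,e_k^2 = \sum_{k=1}^n t_{ik}t_{jk}\sum_{\ell\in\N(k)} e_\ell.
\]
Apply \eqref{lem:int1} with $b_k = t_{ik}t_{jk}$ and $a_k = 1$ to rewrite the right hand side as $\sum_{r=1}^n \bigl(\sum_{k\in\N(r)} t_{ik}t_{jk}\bigr)e_r$. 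Equating this to $0$ and using that $B$ is a basis yields \eqref{eq:propa1}. For $i=j$, I compute both sides of $f(e_i^2)=f(e_i)^2$: on the left, using \eqref{e2-grafo} and linearity,
\[
f(e_i^2)=\sum_{\ell\in\N(i)} f(e_\ell)=\sum_{\ell\in\N(i)}\sum_{r=1}^n t_{\ell r}e_r=\sum_{r=1}^n\Bigl(\sum_{\ell\in\N(i)} t_{\ell r}\Bigr)e_r,
\]
and on the right, the same expansion as before (with $t_{ik}t_{jk}$ replaced by $t_{ik}^2$) together with \eqref{lem:int1} gives $\sum_{r=1}^n \bigl(\sum_{k\in\N(r)}t_{ik}^2\bigr)e_r$. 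Matching coefficients yields \eqref{eq:propb1}.

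Part (2) is identical in structure; the only change is that the natural relations \eqref{e2-grafo} for $\AA_{RW(\Gamma)}$ carry the weight $1/d_k$. Thus $e_k^2=\tfrac{1}{d_k}\sum_{\ell\in\N(k)}e_\ell$, and for $i\neq j$ one obtains
\[
0 = f(e_i)f(e_j)=\sum_{k=1}^n \frac{t_{ik}t_{jk}}{d_k}\sum_{\ell\in\N(k)}e_\ell = \sum_{r=1}^n\Bigl(\sum_{k\in\N(r)}\frac{t_{ik}t_{jk}}{d_k}\Bigr)e_r,
\]
where the last equality uses \eqref{lem:int1} with $b_k=t_{ik}t_{jk}/d_k$ and $a_k=1$; this gives \eqref{RWeq:propa2}. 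For $i=j$, expanding $f(e_i^2)$ produces the factor $1/d_i$ out front of the $\ell$-sum, while expanding $f(e_i)^2$ produces $1/d_k$ inside the $k$-sum, and comparing coefficients of $e_r$ yields \eqref{RWeq:propb2}.

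There is no real obstacle; the only thing to be careful about is the bookkeeping of indices when invoking Lemma \ref{lem:intsum1}, so that the summation index $k$ ranging over all of $V$ on the left is correctly converted to $k\in\N(r)$ on the right, with the appropriate weight ($1$ for $\AA_\Gamma$ and $1/d_k$ for $\AA_{RW(\Gamma)}$).
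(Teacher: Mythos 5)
Your proposal is correct and follows essentially the same route as the paper: expand $f(e_i)f(e_j)$ and $f(e_i)^2$ versus $f(e_i^2)$ using the relations \eqref{e2-grafo}, re-index via Lemma \ref{lem:intsum1}, and compare coefficients in the basis $B$. The only cosmetic difference is your choice of $a_k=1$, $b_k=t_{ik}t_{jk}/d_k$ when invoking \eqref{lem:int1} in the random-walk case, where the paper keeps the weight $1/d_k$ in the $a$-factor; the computation is identical.
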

\begin{proof}
For {\it (1)} consider $i,j\in V$ with $i\neq j$. Since $e_i e_j =0$ we have from \eqref{eq:condf} that
\begin{equation*}
0=f(e_i e_j )=f(e_i) f(e_j)=\left(\sum_{k=1}^n t_{ik}e_k\right)\left(\sum_{k=1}^n t_{jk}e_k\right)=\sum_{k=1}^n t_{ik}t_{jk}e_k^2
\end{equation*}
\begin{equation*}\label{prodceros2}
\hspace{-2.7cm}=\sum_{k=1}^n t_{ik}t_{jk}\left(\sum_{l \in \mathcal{N}(k)} e_l\right)=\sum_{s=1}^n \left(\sum_{k\in \mathcal{N}(s)} t_{ik}t_{jk}\right)e_s,
\end{equation*}
 where the last equalities come  from (\ref{e2-grafo})  and  Lemma \ref{lem:intsum1} {\it (1)} respectively, 
which proves \eqref{eq:propa1}. 

Now, observe that:
\begin{equation}\label{eq:cuad3}
f(e_i^2)=f\left(\sum_{\ell \in \mathcal{N}(i)} e_{\ell}\right)=\sum_{\ell \in \mathcal{N}(i)} f(e_{\ell})=\sum_{\ell \in \mathcal{N}(i)} \left(\sum_{k=1}^n  t_{\ell k}e_{k} \right)=\sum_{s=1}^n \left(\sum_{\ell \in \mathcal{N}(i)} t_{\ell s}\right)e_s,
\end{equation}
 the last equality comes from Lemma \ref{lem:intsum1} {\it (2)}, while
\begin{equation}\label{eq:cuad4}
f(e_i)^2 = \left(\sum_{k=1}^n t_{ik}e_k\right)^2 = \sum_{k=1}^n t_{ik}^2 e_k^2 = \sum_{k=1}^n t_{ik}^2\left( \sum_{\ell \in \mathcal{N}(k)}e_{\ell}\right)=\sum_{s=1}^n \left(\sum_{ k \in \mathcal{N}(s)} t_{ik}^2\right) e_s,
\end{equation}
where the last equalities come  from (\ref{e2-grafo})  and  Lemma \ref{lem:intsum1} {\it (1)}, respectively. Therefore \eqref{eq:propb1} is a consequence of \eqref{eq:condf}, \eqref{eq:cuad3} and \eqref{eq:cuad4}.

The proof for {\it (2)} is analogous, just consider  $\AA_{RW(\Gamma)}$, for $i,j\in V$ with $i\neq j$. 
\begin{equation*}
0=f(e_i e_j )=f(e_i) f(e_j)
=\sum_{k=1}^nt_{ik}t_{jk}e_k^2
=\sum_{k=1}^n t_{ik}t_{jk}\left( \sum_{l \in \mathcal{N}(k)} \frac{1}{d_k} e_l\right) 
=\sum_{s=1}^n \left(\sum_{k\in \mathcal{N}(s)} \frac{1}{d_k}t_{ik}t_{jk}\right)e_s,
\end{equation*} and 
\begin{equation*}\label{RWeq:cuad5}
f(e_i^2)=f\left( \frac{1}{d_i} \sum_{\ell \in \mathcal{N}(i)} e_{\ell}\right)
=\sum_{\ell \in \mathcal{N}(i)} \frac{1}{d_i} \left(\sum_{k=1}^n  t_{\ell k} e_k\right)=\sum_{s=1}^n\left(\sum_{\ell \in \mathcal{N}(i)} \frac{1}{d_i} t_{\ell s} \right)e_s,
\end{equation*}
while
\begin{equation*}\label{RWeq:cuad6}
f(e_i)^2 = 
\sum_{k=1}^n t_{ik}^2 e_k^2 = \sum_{k=1}^n t_{ik}^2 \left( \frac{1}{d_k} \sum_{l\in \mathcal{N}(k)}  e_{l}\right)=\sum_{s=1}^n \left(\sum_{k\in \mathcal{N}(s)} \frac{1}{d_k}t_{ik}^2\right) e_s.
\end{equation*}\vspace{0.2cm}
\end{proof}

The proof of the following corollary follows directly from \eqref{eq:propa1} to  \eqref{RWeq:propb2}.

\begin{corollary}\label{GemyColg}
    Under the same conditions of the previous proposition, it holds:
    \begin{enumerate}[noitemsep, leftmargin=20pt]
     \item 
        for  $i,j,k \in V,$ if $ \mathcal{N}(j)= \mathcal{N}(k)$, then
    \begin{equation} \label{gemelos1}
        \sum_{\ell \in \mathcal{N}(i)}t_{\ell k}=\sum_{\ell \in \mathcal{N}(i)}t_{\ell j,}  \end{equation}
        and
        \begin{equation} \label{gemelos2} \textrm{either }\; \sum_{\ell \in \mathcal{N}(i)}t_{k \ell }^2=\sum_{\ell \in \mathcal{N}(i)}t_{j \ell}^2 \hspace{0.5cm} 
     \textrm{or} \hspace{0.5cm} \sum_{l \in \mathcal{N}(i)}\frac{1}{d_l}t_{k l }^2=\sum_{l \in \mathcal{N}(i)}\frac{1}{d_l}t_{j l}^2 ,
    \end{equation}
    it depends on whether $f \in \aut (\AA_{\Gamma})$ or $f \in \aut (\AA_{RW(\Gamma)}).$
     \item  If $\ell\in V$ is such that   $\mathcal{N}(\ell)=\{k_{\ell}\}$, then 
    \begin{equation} \label{colg1}
    \displaystyle t_{i\, k_{\ell}}t_{j\,k_{\ell}}= 0, \hspace{0.2cm} \textrm{for all} \hspace{0.2cm} i,j\in V, \hspace{0.2cm} \textrm{with}  \hspace{0.2cm} i\neq j
    \end{equation}   
and 
\begin{equation} \label{colg2}
    \displaystyle t_{i\,k_{\ell}}\neq 0, \hspace{0.2cm} \textrm{for exactly one} \hspace{0.2cm} i\in V.
     \end{equation}
      \end{enumerate}
  \end{corollary}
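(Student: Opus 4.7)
The plan is to derive both parts directly by evaluating the identities \eqref{eq:propa1}--\eqref{RWeq:propb2} of Proposition \ref{eqayba1a1} at carefully chosen indices, together with the invertibility of the matrix $[f]_B$.

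For part (1), the idea is to apply \eqref{eq:propb1} twice, once with $r=j$ and once with $r=k$. Since $\mathcal{N}(j)=\mathcal{N}(k)$, the left-hand sides $\sum_{k'\in\mathcal{N}(j)}t_{ik'}^2$ and $\sum_{k'\in\mathcal{N}(k)}t_{ik'}^2$ are identical, so the corresponding right-hand sides $\sum_{\ell\in\mathcal{N}(i)}t_{\ell j}$ and $\sum_{\ell\in\mathcal{N}(i)}t_{\ell k}$ must agree, which is exactly \eqref{gemelos1}. In the symmetric random walk case the analogous computation uses \eqref{RWeq:propb2}; the extra factor $\tfrac{1}{d_i}$ appears on both sides and cancels. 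For \eqref{gemelos2} I swap the roles, fixing the first index: applying \eqref{eq:propb1} with the roles $(i,r)\mapsto(k,i)$ gives $\sum_{\ell\in\mathcal{N}(i)}t_{k\ell}^2=\sum_{m\in\mathcal{N}(k)}t_{mi}$, and with $(i,r)\mapsto(j,i)$ gives $\sum_{\ell\in\mathcal{N}(i)}t_{j\ell}^2=\sum_{m\in\mathcal{N}(j)}t_{mi}$; the hypothesis $\mathcal{N}(j)=\mathcal{N}(k)$ equates the right-hand sides. The only subtlety is the SRW version: applying \eqref{RWeq:propb2} the right-hand sides carry factors $\tfrac{1}{d_k}$ and $\tfrac{1}{d_j}$ respectively, but $\mathcal{N}(j)=\mathcal{N}(k)$ forces $d_j=|\mathcal{N}(j)|=|\mathcal{N}(k)|=d_k$, so these also cancel and we get the weighted version of \eqref{gemelos2}.

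For part (2), the key observation is that when $\mathcal{N}(\ell)=\{k_\ell\}$, the sums over $\mathcal{N}(r)$ appearing in \eqref{eq:propa1} and \eqref{RWeq:propa2} collapse to a single term. Specifically, setting $r=\ell$ in \eqref{eq:propa1} leaves $t_{i\,k_\ell}t_{j\,k_\ell}=0$ for all $i\neq j$, which is \eqref{colg1}; the SRW version follows identically from \eqref{RWeq:propa2} after clearing the nonzero factor $\tfrac{1}{d_{k_\ell}}$. Equation \eqref{colg2} then follows by combining \eqref{colg1} with the fact that $f\in\aut(\AA)$ has an invertible matrix representation $[f]_B=(t_{ij})$: the row (or column, depending on convention) indexed by $k_\ell$ cannot be identically zero, so at least one $t_{i\,k_\ell}$ is nonzero, and \eqref{colg1} forces it to be unique.

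No step here requires new ideas beyond bookkeeping; the only mild obstacle is handling the SRW normalization factors $\tfrac{1}{d_k}$ correctly and observing that the twin hypothesis $\mathcal{N}(j)=\mathcal{N}(k)$ implies $d_j=d_k$, so that these factors cancel consistently.
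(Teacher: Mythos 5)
Your proof is correct and follows exactly the route the paper intends: the paper gives no details beyond "follows directly from \eqref{eq:propa1}--\eqref{RWeq:propb2}", and your specializations of those identities (including the observation that $\mathcal{N}(j)=\mathcal{N}(k)$ forces $d_j=d_k$ in the SRW case, and the use of invertibility of $[f]_B$ for \eqref{colg2}) supply precisely the omitted bookkeeping.
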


\subsection{Diagonal subgroup of \boldmath{$\aut (\AA)$.}} \label{dgn}

Let $\AA \in \{\AA_{\Gamma},\AA_{RW(\Gamma)}\}$ and  $\DD \leq \aut(\AA)$ be the subgroup of automorphisms $f$ such that $f(e_i) = t_{ii}e_i$, for $t_{ii} \in \mathbb{K}^*$ and  $i\in V$, i.e.,
\begin{equation*}
[f]_{B}= \begin{pmatrix}
    t_{11} & 0 & \dots & 0 \\
   0 & t_{22} & \dots & 0 \\
    \vdots & \vdots& \ddots & \vdots\\
    0 & 0 & \dots & t_{nn}
\end{pmatrix}.
\end{equation*}
Let us denote by $(t_{11}, t_{22}, \dots, t_{nn})_{[f]_B}$ its diagonal and, from now on,  for $f \in \DD$,  we will identify $[f]_{B}$ with $(t_{11}, t_{22}, \dots, t_{nn})_{[f]_B}$. We  call $\DD$ the {\it diagonal subgroup of $\aut (\AA)$ relative to $B$}. Since our basis is fixed and given by the graph $\Gamma$, we call $\DD$  simply the {\it diagonal subgroup of $\aut (\AA)$}.

In \cite{Eld-Lab2015, Eld-Lab2021}, assuming that $\AA$ is perfect, it has been shown that the diagonal subgroup of $\aut (\AA)$ is isomorphic to the  group:
$$\{(u_1, \dots, u_n) \in (\mathbb{K}^*)^n \, | \, u_i^2=u_{\ell},   (i,\ell)\in E\},$$
{\it the diagonal subgroup of $\Gamma$ in $\mathbb{K}$}.   The next proposition shows that, in our context, these groups are isomorphic even if the algebras are not perfect. Since our graphs are bi-directed recall that
$\{i,\ell\}\in E$ if and only if $\ell \in \mathcal{N}(i)$ if and only if $ i \in \mathcal{N}(\ell)$.  

\begin{prop} \label{eqaybDiag} 
Let $B=\{e_i\}_{i=1}^n$ be the natural basis for $\AA \in \{\AA_{\Gamma},\AA_{RW(\Gamma)}\}$,  $f \in \mathcal{L}(\AA)$  and $\mathcal{D} \leq \aut (\AA)$. Then  $f \in \mathcal{D}$ if and only if $[f]_{B}$  is a diagonal matrix such that: 
\begin{enumerate}[noitemsep, leftmargin=20pt]
 \item $t_{ii} \in \mathbb{K}^*$ 
for all $i\in V$,
    \item $t_{ii}^2=t_{\ell \ell}$ for all $\ell \in \mathcal{N}(i)$,
    \item for all $i \in V$, $t_{ii}$ is a cubic root of the unit of $\mathbb{K}$.
   \end{enumerate}
\end{prop}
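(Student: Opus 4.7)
The plan is to prove the two implications directly, using the basic identity $f(e_i^2)=f(e_i)^2$ together with the explicit expression for $e_i^2$ given in \eqref{e2-grafo}. Since both $\AA_\Gamma$ and $\AA_{RW(\Gamma)}$ have $e_i^2$ proportional to $\sum_{\ell\in\mathcal{N}(i)} e_\ell$, the arguments for the two cases will be essentially identical, with any factor $1/d_i$ canceling on both sides.

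For the forward direction, assume $f\in\mathcal{D}$, so $[f]_B$ is diagonal with entries $t_{ii}$ and, since $f$ is invertible, $\det[f]_B=\prod_i t_{ii}\neq 0$, giving (1). To deduce (2), I would compute both $f(e_i^2)$ and $f(e_i)^2$ using \eqref{e2-grafo}: the first equals (up to the common scalar $\tfrac{1}{d_i}$ in the SRW case) $\sum_{\ell\in\mathcal{N}(i)} t_{\ell\ell} e_\ell$, while the second equals $t_{ii}^2 e_i^2 = t_{ii}^2 \sum_{\ell\in\mathcal{N}(i)} e_\ell$ (again up to the same scalar). Equating coefficients on the natural basis yields $t_{\ell\ell}=t_{ii}^2$ for every $\ell\in\mathcal{N}(i)$.

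Condition (3) then follows from (2) by exploiting the symmetry of $\mathcal{N}$ already highlighted in the paper: for any edge $\{i,\ell\}$, one has simultaneously $t_{\ell\ell}=t_{ii}^2$ (from $\ell\in\mathcal{N}(i)$) and $t_{ii}=t_{\ell\ell}^2$ (from $i\in\mathcal{N}(\ell)$). Substituting the first into the second gives $t_{ii}=t_{ii}^4$, i.e.\ $t_{ii}(t_{ii}^3-1)=0$, and since $t_{ii}\in\mathbb{K}^{*}$ by (1) we conclude $t_{ii}^3=1$. Connectedness of $\Gamma$ (with $n\geq 2$) guarantees that every vertex has at least one neighbor, so this argument applies to all $i\in V$.

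For the converse, suppose $[f]_B$ is diagonal and satisfies (1)--(3). Invertibility of $f$ is immediate from (1). To check that $f$ is an algebra homomorphism it suffices to verify $f(e_ie_j)=f(e_i)f(e_j)$ for all $i,j$: when $i\neq j$ both sides vanish because $e_ie_j=0$ and $f(e_i)f(e_j)=t_{ii}t_{jj}\,e_ie_j=0$; when $i=j$, applying $f$ to \eqref{e2-grafo} and using (2) gives $f(e_i^2)=\sum_{\ell\in\mathcal{N}(i)}t_{\ell\ell}e_\ell=t_{ii}^2\sum_{\ell\in\mathcal{N}(i)}e_\ell=t_{ii}^2 e_i^2=f(e_i)^2$ (with the appropriate $1/d_i$ factor in the SRW case). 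Hence $f\in\aut(\AA)$, and by construction it is diagonal, so $f\in\mathcal{D}$. There is no real obstacle here; the only subtlety worth flagging is that (3) is a redundant consequence of (1) and (2), but stating it explicitly will be useful downstream (it is what forces the structure $\mathcal{D}\cong\mathbb{Z}_3$ in Corollary \ref{AlgD}).
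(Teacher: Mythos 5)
Your proposal is correct and follows essentially the same route as the paper: deriving (2) by equating the coefficients of $f(e_i^2)$ and $f(e_i)^2$ via \eqref{e2-grafo}, obtaining (3) from the symmetry $\ell\in\mathcal{N}(i)\Leftrightarrow i\in\mathcal{N}(\ell)$ to get $t_{ii}^4=t_{ii}$, and noting the converse is a direct verification. Your explicit check of the converse and the remark that (3) is redundant given (1) and (2) are slightly more detailed than the paper, which simply calls the reciprocal immediate, but the substance is identical.
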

\begin{proof}
 If $f \in \mathcal{D} $ then: 
\begin{enumerate}[noitemsep, leftmargin=20pt]
\item[{\it (1)}] By definition
$f(e_i) = t_{ii}e_i$, for $t_{ii} \in \mathbb{K}^*$ 
for all $i\in V$. 
    \item[{\it (2)}] For $i\in V$, 
    \begin{equation*}
    f(e_i^2)=      \begin{cases}
        \sum_{l \in \mathcal{N}(i)} t_{ll}e_{l}, \textrm { if $\AA=\AA_{\Gamma}$},\\
     \sum_{l \in \mathcal{N}(i)} \frac{1}{d_i} t_{ll}e_{l}, \textrm { if  $\AA=\AA_{RW(\Gamma)}$}.
 \end{cases}
    \end{equation*}
While
\begin{equation*}
f(e_i)^2 = t_{ii}^2e_i^2 =\begin{cases} \sum_{l\in \mathcal{N}(i)}t_{ii}^2e_{l}, \textrm { if  if $\AA=\AA_{\Gamma}$},\\
\sum_{l\in \mathcal{N}(i)}\frac{1}{d_i} t_{ii}^2e_{l}, \textrm { if   if  $\AA=\AA_{RW(\Gamma)}$}.
\end{cases}
\end{equation*}
Then $t_{ii}^2=t_{ll}$ for all $l \in \mathcal{N}(i)$. \vspace{0.3cm}
\item[{\it (3)}] Let $i, \ell \in V$ such that  $\ell \in \mathcal{N}(i)$. Since $\ell \in \mathcal{N}(i)$, if and only if, $i \in \mathcal{N}(\ell)$, then from the previous item we have:  $t_{ii}^2=t_{\ell \ell}$ for all $\ell \in \mathcal{N}(i)$ and  $t_{\ell\ell}^2=t_{ii}$ for all $i \in \mathcal{N}(\ell)$. Hence $t_{ii}^4=t_{ii}$ and so  $t_{ii}^3=1$, which shows that for all $i \in V$, $t_{ii}$ is a cubic root of the unit of $\mathbb{K}$.
     \end{enumerate}
     
If $[f]_{B}$  is a diagonal matrix, the reciprocal is immediate from  {\it (1), (2), (3).}
\end{proof} 

\begin{corollary} Let $\mathbb{K}$ be an algebraically closed field.
Under the assumptions of Proposition \ref{eqaybDiag}, if  $1,\alpha, \beta$ denote the three distinct cubic roots of the unity  and  $(t_{11}, t_{22}, \dots, t_{nn})_{[f]_B}$ is the diagonal of $f\in \DD$ then:
\begin{enumerate} [noitemsep, leftmargin=25pt]
\item  If $t_{ii}=1$ for some $i\in V,$ then $f=id$.
\item If  $t_{ii}=\alpha$ for some $i$, then there exists $j\in V\setminus \{i\}$ such that $t_{jj}=\beta$.
\item If $f \neq Id$, then for all $i\in V$ and all $j\in \mathcal{N}(i)$ we have $t_{ii}\neq t_{jj}$.
\end{enumerate}
\end{corollary}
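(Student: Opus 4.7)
The plan is to apply Proposition \ref{eqaybDiag} systematically. From that proposition, $[f]_B$ is diagonal with entries $t_{ii} \in \KK^*$ satisfying $t_{ii}^3 = 1$ and $t_{\ell\ell} = t_{ii}^2$ for every $\ell \in \mathcal{N}(i)$. Since $\KK$ is algebraically closed, each $t_{ii}$ lies in $\{1,\alpha,\beta\}$, and the identities $\alpha^2 = \beta$, $\beta^2=\alpha$ (which follow from $\alpha\beta = 1$) will be used throughout.

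For item (1), I would argue by propagation along the graph. Set $S := \{i \in V : t_{ii}=1\}$, which is nonempty by hypothesis. If $i \in S$ and $\ell \in \mathcal{N}(i)$, then Proposition \ref{eqaybDiag}(2) gives $t_{\ell\ell} = t_{ii}^2 = 1$, so $\ell \in S$. Thus $S$ is closed under taking neighbors, and connectedness of $\Gamma$ forces $S = V$, giving $f = \text{id}$.

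For item (2), the non-degeneracy of $\AA$ ensures $\mathcal{N}(i) \neq \emptyset$ for every $i \in V$; pick any $j \in \mathcal{N}(i)$. Since $\Gamma$ has no loops, $j \neq i$, and $t_{jj} = t_{ii}^2 = \alpha^2 = \beta$, as required. For item (3), suppose $f \neq \text{Id}$. The contrapositive of (1) gives $t_{ii} \neq 1$ for all $i \in V$, hence $t_{ii} \in \{\alpha,\beta\}$. For any $j \in \mathcal{N}(i)$, Proposition \ref{eqaybDiag}(2) yields $t_{jj} = t_{ii}^2$, which equals $\beta$ when $t_{ii} = \alpha$ and $\alpha$ when $t_{ii} = \beta$; in either case $t_{jj} \neq t_{ii}$.

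There is no real obstacle here: once Proposition \ref{eqaybDiag} is available, the argument reduces to the arithmetic of cube roots of unity together with the fact that $\Gamma$ is connected and loopless. The only subtlety worth flagging is the use of non-degeneracy in (2) to guarantee that $i$ has at least one neighbor, and the use of the ``no loops'' hypothesis to ensure $j \neq i$.
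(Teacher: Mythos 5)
Your proposal is correct and follows essentially the same route as the paper: everything is deduced from Proposition \ref{eqaybDiag} together with connectedness of $\Gamma$ and the arithmetic of cube roots of unity ($\alpha^2=\beta$, $\beta^2=\alpha$). The only cosmetic difference is in item (2), where you argue directly via a neighbor of $i$ while the paper argues by contradiction assuming the whole diagonal equals $\alpha$; both are valid and rest on the same identity $t_{jj}=t_{ii}^2$.
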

\begin{proof}
{\it (1)} If $t_{ii}=1$ for some $i\in V$, then from Proposition \ref{eqaybDiag}, $t_{\ell \ell}=1$ for all $\ell \in \mathcal{N}(i)$. Thus,   for  the connectedness of the graph, $t_{jj}=1$ for all $j \in V$ and $f=id$.\\
{\it (2)} If  $t_{ii}=\alpha$ for all $i \in V$, then from Proposition
\ref{eqaybDiag} {\it (2)}, $\alpha^2=\alpha$ and $\alpha=1$, a contradiction with the hypothesis.\\
{\it (3)} If $f \neq Id$ and there exist $i\in V$ and  $j\in \mathcal{N}(i)$ with $t_{ii}= t_{jj}$,  from Proposition \ref{eqaybDiag} {\it (2)} we have $t_{ii}=1$. Then from item {\it (1)} above, $f=id$ and we have a contradiction.
\end{proof}

\begin{remark}\label{bipartito}
From the previous results, it is clear that the possibility of having $\DD \neq \{Id\}$ depends on the existence of a non-trivial partition of $V$ in two sets, say $V_1$ and $V_2$,  in such a way that:
\begin{itemize}[noitemsep, leftmargin=20pt]
\item we can identify the elements in $V_1$ with the same non-trivial cubic root of the unity and the elements in  $V_2$ with the other,
\item  $i\in V_1$, if and only if, $\mathcal{N}(i) \subseteq V_2.$ But, since $V_1 \cap V_2 = \emptyset$,  this is exactly the definition of a bipartite graph, (see  \cite{BR}).
\end{itemize}
\end{remark}

Recall that a cycle in a graph is an edge path that starts and ends at the same vertex. Since we are dealing with  bidirected graphs,   every vertex is in a cycle.

\begin{lemma}[Theorem 1.5.10 \cite {BR}]\label{oddcycle}
 A graph is bipartite if and only if it contains no odd-length cycles.   
\end{lemma}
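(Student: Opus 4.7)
The plan is to prove both directions of the biconditional, leveraging the fact that we are working with connected graphs (as specified in the paper's standing hypotheses).

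For the forward direction, suppose $\Gamma$ is bipartite with bipartition $V=V_1\sqcup V_2$. I would observe that every edge has exactly one endpoint in each of $V_1, V_2$. Hence any cycle $v_0,v_1,\dots,v_k=v_0$ must alternate between $V_1$ and $V_2$ as the index $i$ increases; since $v_0$ and $v_k$ lie in the same part, the number of edges $k$ must be even.

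For the converse, assume $\Gamma$ has no odd-length cycles. Since $\Gamma$ is connected, I would fix a vertex $v_0\in V$ and define, using the distance function \eqref{distance},
\begin{equation*}
V_1=\{v\in V:d(v_0,v)\text{ is even}\},\qquad V_2=\{v\in V:d(v_0,v)\text{ is odd}\}.
\end{equation*}
Clearly $V=V_1\sqcup V_2$ is a partition. It remains to verify that no edge joins two vertices of the same part; this, combined with Remark \ref{bipartito}, would identify $\Gamma$ as bipartite.

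The crux is the following claim: if $\{u,w\}\in E$ with $u,w$ in the same part $V_s$, then $\Gamma$ contains an odd-length cycle, contradicting the hypothesis. To prove it, I would pick shortest paths $\pi_u$ from $v_0$ to $u$ and $\pi_w$ from $v_0$ to $w$; by construction $l(\pi_u)$ and $l(\pi_w)$ have the same parity, so $l(\pi_u)+l(\pi_w)+1$ is odd. Concatenating $\pi_u$, the edge $\{u,w\}$, and the reverse of $\pi_w$ yields a closed walk $W$ of odd length at $v_0$. The main technical step, which I expect to be the subtlest point, is to extract an \emph{odd cycle} from this closed walk (which in principle might repeat vertices). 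I would argue by induction on $l(W)$: if $W$ is already a simple cycle we are done; otherwise, some vertex $x$ appears twice in $W$, splitting $W$ into two shorter closed walks whose lengths add to $l(W)$. Since $l(W)$ is odd, at least one of these two sub-walks has odd length, and by the inductive hypothesis it contains an odd cycle. This contradicts the no-odd-cycle hypothesis and forces $u\in V_1,w\in V_2$ (or vice versa) for every edge $\{u,w\}$, completing the proof.
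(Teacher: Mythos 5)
Your proof is correct, but note that the paper does not actually prove this lemma: it is quoted verbatim as Theorem 1.5.10 of the cited textbook \cite{BR}, so there is no in-paper argument to compare against. What you have written is the standard K\"onig-type proof, and it is sound. The forward direction (edges alternate between parts, so a closed walk returning to its start has even length) is immediate. In the converse, your partition of $V$ by parity of $d(v_0,\cdot)$ matches the sets $V_1,V_2$ the authors themselves construct in the proof of Theorem \ref{D1o3}, and you correctly identify the one genuinely delicate step: the concatenation $\pi_u$, $\{u,w\}$, reversed $\pi_w$ is a priori only a closed \emph{walk}, and an odd cycle must be extracted from it. Your induction on $l(W)$ handles this properly; it is worth observing that it bottoms out because the graphs here are simple and loopless (so every piece produced by splitting at a repeated vertex has length at least $2$, and a closed walk of length $3$ is forced to be a triangle). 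Two small remarks: the paper's own definition of ``cycle'' is merely ``a path that starts and ends at the same vertex,'' i.e.\ a closed walk, under which your extraction step could be skipped entirely --- your version proves the stronger, standard statement. Also, connectedness is not needed for the lemma itself (one applies the distance argument componentwise), though it is harmless to assume it given the paper's standing hypotheses.
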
 

\begin{theorem}\label{D1o3}Let $\AA \in \{\AA_{\Gamma},\AA_{RW(\Gamma)}\}$  an evolution $\mathbb{K}$-algebra associated to $\Gamma$, for $\mathbb{K}$ algebraically closed.  If $\mathcal{D} \leq \aut (\AA)$ is the diagonal subgroup, then 
 $\DD \neq \{id\}$  if and only if  $\Gamma$  is  bipartite.
\end{theorem}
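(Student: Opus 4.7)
The plan is to prove both implications directly from the structural results on $\DD$ already established, essentially formalizing the observation in Remark \ref{bipartito}. Let $1,\alpha,\beta$ denote the three distinct cube roots of unity in $\mathbb{K}$ (available because $\mathbb{K}$ is algebraically closed of characteristic zero), so $\beta=\alpha^2$ and $\alpha^2+\alpha+1=0$.

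For the forward direction, assume $\DD\neq\{id\}$ and pick any $f\in\DD\setminus\{id\}$ with diagonal $(t_{11},\dots,t_{nn})_{[f]_B}$. By Proposition \ref{eqaybDiag}(3) each $t_{ii}\in\{1,\alpha,\beta\}$, and by item (1) of the preceding corollary, $f\neq id$ forces $t_{ii}\neq 1$ for every $i\in V$; thus each $t_{ii}\in\{\alpha,\beta\}$. Define
\[
V_1:=\{i\in V\mid t_{ii}=\alpha\},\qquad V_2:=\{i\in V\mid t_{ii}=\beta\}.
\]
Item (2) of the corollary yields $V_1\neq\emptyset$ and $V_2\neq\emptyset$, and item (3) says $t_{ii}\neq t_{jj}$ for every $j\in\mathcal{N}(i)$; hence any neighbor of a vertex of $V_1$ lies in $V_2$ and vice-versa. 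This is precisely a proper $2$-coloring of $\Gamma$, so $\Gamma$ is bipartite.

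For the converse, assume $\Gamma$ is bipartite with bipartition $V=V_1\sqcup V_2$. Define $f\in\mathcal{L}(\AA)$ on the basis $B$ by
\[
f(e_i)=\begin{cases}\alpha\, e_i, & i\in V_1,\\ \beta\, e_i, & i\in V_2,\end{cases}
\]
so $[f]_B$ is diagonal with entries $t_{ii}\in\{\alpha,\beta\}\subset\mathbb{K}^*$. I will then verify the three conditions of Proposition \ref{eqaybDiag}: (1) is clear; (3) is clear from $\alpha^3=\beta^3=1$; for (2), if $i\in V_1$ and $\ell\in\mathcal{N}(i)$ then $\ell\in V_2$ and $t_{ii}^2=\alpha^2=\beta=t_{\ell\ell}$, while if $i\in V_2$ and $\ell\in\mathcal{N}(i)\subseteq V_1$ then $t_{ii}^2=\beta^2=\alpha^4=\alpha=t_{\ell\ell}$. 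Hence $f\in\DD$, and since $V_1\neq\emptyset$ we have $f\neq id$, i.e., $\DD\neq\{id\}$.

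I don't expect a genuine obstacle here: all the algebraic machinery has been developed, and the proof reduces to reading off the defining properties of the diagonal subgroup against the combinatorial definition of a bipartite graph. The only point requiring a little care is making sure both classes $V_1,V_2$ are non-empty in the forward direction, which is exactly what item (2) of the preceding corollary guarantees; consequently Lemma \ref{oddcycle} is not needed for this statement (it would be an alternative route, via showing that the relation $t_{ii}^2=t_{\ell\ell}$ cannot be consistently propagated around an odd cycle unless all $t_{ii}=1$).
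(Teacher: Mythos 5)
Your proof is correct and takes essentially the same approach as the paper: the forward direction makes Remark \ref{bipartito} precise by reading a proper $2$-coloring off the diagonal entries via Proposition \ref{eqaybDiag} and its corollary, and the converse constructs the same map $f$ the paper does, just starting from an abstract bipartition rather than the even/odd path-length classes (so Lemma \ref{oddcycle} is indeed not needed). One tiny nitpick: item (2) of the corollary as stated only yields $V_1\neq\emptyset\Rightarrow V_2\neq\emptyset$, not both non-emptiness claims at once, but this is harmless since bipartiteness already follows from item (3) alone.
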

\begin{proof}
 Without loss of generality,  fix $1 \in V$, $2 \in \mathcal{N}(1)$ and, recalling that $l(\pi)$ denotes the length of a path $\pi$, we define the following two subsets of $V$:
\begin{equation*}\label{V1V2}
\begin{array}{ccc}
V_1 &:=\{i \in V|\;   \textrm{ there is a path $\pi$ between $1$ and $i$ such that $l(\pi)$ is even}\}, \\
V_2 &:=\{i \in V|\;  \textrm{ there is a path $\pi$ between $1$ and $i$ such that $l(\pi)$ is odd}\}.
\end{array}
\end{equation*}
Notice that $1 \in V_1$, $2 \in V_2 $ and $V_1 \cup V_2=V$. Moreover, $V_1 \cap V_2 \neq \emptyset$ if and only if the vertex $1$ is in an odd length cycle of $\Gamma$. In this case, due to the connectedness of $\Gamma$,  every vertex in the graph  is in an odd length cycle.

If we suppose that $\Gamma$ is a bipartite graph, then from Lemma \ref{oddcycle},  $V_1 \cap V_2 = \emptyset.$ Hence, if $\alpha, \beta$ are the non-trivial roots of the unity, we could define 
\begin{equation}\label{fs}
f(e_i):= 
\begin{cases}
\alpha e_i, \textrm{ if $i \in V_1$},\\
\beta e_i, \textrm{ if $i \in V_2$}.
\end{cases}
\end{equation}
It is clear that $f \neq id$ and we claim that $f \in \DD$. In fact,
notice that $f$ satisfies items {\it (1)} and {\it (3)}  of Proposition \ref{eqaybDiag}. For item {\it (2)}, since $\alpha^2 =\beta$, it is enough to show that for all $ i \in V$ and for all $ \ell \in \mathcal{N}(i) $ then $i \in V_1$ if and only if $\ell \in V_2$. Indeed,  since 
$V_1 \cap V_2= \emptyset$, then  $i \in V_1$ if and only if there exists an even length  path between $1$ and $i$. Equivalently,  for all $\ell \in \mathcal{N}(i)$ there is an odd length path between $1$ and $\ell$, because $d(\ell, i)=1$, see \eqref{distance}. Equivalently, $\ell \in V_2$, for all $\ell \in \mathcal{N}(i)$.  

The other implication follows from Remark \ref{bipartito}.
\end{proof}

\begin{corollary}\label{AlgD}
      With the hypothesis  of the  theorem above, either $\DD=\{Id\}$  or  $\DD \cong \mathbb{Z}_3$.   \end{corollary}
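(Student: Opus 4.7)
The plan is to split on whether $\Gamma$ is bipartite (applying Theorem \ref{D1o3}) and, in the bipartite case, to enumerate $\DD$ explicitly using the constraints on diagonal automorphisms provided by Proposition \ref{eqaybDiag}.

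If $\Gamma$ is not bipartite, then Theorem \ref{D1o3} gives $\DD=\{Id\}$ directly, so it suffices to treat the bipartite case. In that case, write $V=V_1\sqcup V_2$ for the bipartition and let $1,\alpha,\beta$ denote the three cubic roots of unity in $\KK$, with $\alpha^2=\beta$ and $\beta^2=\alpha$. Take any $f\in\DD$ with diagonal $(t_{11},\dots,t_{nn})_{[f]_B}$. By Proposition \ref{eqaybDiag}(3) each $t_{ii}\in\{1,\alpha,\beta\}$, and by Proposition \ref{eqaybDiag}(2) one has $t_{ii}^2=t_{\ell\ell}$ for every $\ell\in\mathcal{N}(i)$.

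The key reduction uses the corollary immediately following Proposition \ref{eqaybDiag}: if any $t_{ii}=1$, then $f=Id$. So for a nontrivial $f\in\DD$ every diagonal entry lies in $\{\alpha,\beta\}$, and the relation $t_{ii}^2=t_{\ell\ell}$ forces adjacent vertices to carry different values. Since $\Gamma$ is connected, fixing the value at any one vertex propagates uniquely along edges: all vertices in the same partition class as that vertex inherit the same value, and vertices in the other class inherit the other value. This leaves precisely two nontrivial choices, which are exactly the automorphism $f$ produced in the proof of Theorem \ref{D1o3} via the assignment \eqref{fs} and its ``conjugate'' obtained by swapping $\alpha\leftrightarrow\beta$. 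Hence $|\DD|=3$.

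Finally, to upgrade the cardinality count to the isomorphism $\DD\cong \mathbb{Z}_3$, I note that $\DD$ embeds (via the diagonal) into the abelian group $(\KK^*)^n$ under componentwise multiplication, so $\DD$ is abelian; a group of order $3$ is cyclic, and concretely the nontrivial $f$ with diagonal $\alpha$ on $V_1$ and $\beta$ on $V_2$ satisfies $f^2\neq Id$ and $f^3=Id$, giving $\DD=\langle f\rangle\cong \mathbb{Z}_3$. I do not expect any real obstacle here, since the whole argument is a direct enumeration once Theorem \ref{D1o3} and Proposition \ref{eqaybDiag} are in hand; the mildly delicate point is simply observing that connectedness plus the quadratic constraint forces a nontrivial diagonal automorphism to take exactly two values distributed according to the bipartition.
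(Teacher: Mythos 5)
Your proof is correct and follows essentially the same route as the paper: both reduce to the bipartite case via Theorem \ref{D1o3} and then identify the two nontrivial diagonal automorphisms determined by assigning $\alpha$ and $\beta$ to the two parts of the bipartition. If anything, your argument is slightly more complete than the paper's, since you explicitly justify (via item (1) of the corollary to Proposition \ref{eqaybDiag} and connectedness) that \emph{every} nontrivial element of $\DD$ must be one of these two, whereas the paper simply points back to the construction in the proof of Theorem \ref{D1o3}.
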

      \begin{proof}
      If $\DD$ is not trivial, from the arguments in proof of Theorem \ref{D1o3}, in \eqref{fs}, there are two nontrivial possibilities for $f \in \DD:$ 
      \begin{equation*}
 \hspace*{-4.5cm}     \textrm{either }\quad
f(e_i):= 
\begin{cases}
\alpha e_i, \textrm{ if $i \in V_1$},\\
\beta e_i, \textrm{ if $i \in V_2$}
\end{cases}\,
\textrm { or } \quad
f(e_i):= 
\begin{cases}
\beta e_i, \textrm{ if $i \in V_1$},\\
\alpha e_i, \textrm{ if $i \in V_2$}.
\end{cases}
\end{equation*}
Then $|\DD|=3$ and therefore  $\DD \cong \mathbb{Z}_3.$
      \end{proof}

 Corollary \ref{AlgD} makes it possible to calculate quickly and easily all the elements that belong to the diagonal subgroup $\mathcal{D} $ of $\aut (\AA_{\Gamma})$, for the evolution algebra $\AA_{\Gamma}$, associated to any finite graph undirected, simple, connected, and without loops. 
     We illustrate this process for both, perfect and non-perfect evolution algebras, in the following examples.
     
  \begin{exa} \label{ex:autC22}
 Let $\AA_{C_{2,2}}$ be the evolution $\mathbb{C}$-algebra associated to the caterpillar tree graph $C_{2,2}$ (see Fig. \ref{FIG:C22}), which is  not perfect.
\begin{figure}[h]
\centering
\begin{tikzpicture}[scale=0.4]
\draw (3.9,0) node[above,font=\footnotesize] {$\,$} -- (2.8,2);
\draw (3.9,-1.4) node[above,font=\footnotesize] {$1$};
\draw (2.8,2) node[above,font=\footnotesize] {$2$};
\draw (4.9,2) node[above,font=\footnotesize] {$3$} -- (3.9,0);
\draw (9.9,0) node[above,font=\footnotesize] {$\,$} -- (3.9,0);
\draw (9.9,-1.4) node[above,font=\footnotesize] {$4$};
\draw (8.7,2) node[above,font=\footnotesize] {$5$} -- (9.9,0);
\draw (10.9,2) node[above,font=\footnotesize] {$6$} -- (9.9,0);
\filldraw [black] (3.9,0) circle (3.5pt);
\filldraw [black] (2.8,2) circle (3.5pt);
\filldraw [black] (4.9,2) circle (3.5pt);
\filldraw [black] (9.9,0) circle (3.5pt);
\filldraw [black] (8.7,2) circle (3.5pt);
\filldraw [black] (10.9,2) circle (3.5pt);
\end{tikzpicture}
\caption{ Caterpillar tree graph $C_{2,2}$}
\label{FIG:C22}
\end{figure} 
Since $C_{2,2}$ is  bipartite, then 
$ \DD \neq \{Id\}$. Thus, from Corollary \ref{AlgD}, if $V_{1}=\{1,5,6\}$, $V_{2}=\{2,3,4\}$ and \,$1,  \alpha=\frac{-1+\sqrt{3}i}{2}, \beta =\frac{-1-\sqrt{3}i}{2}$ are the three cubic roots of the unit in $\mathbb{C}$, we have $\DD= \{Id, h,g \}$     with $ (1,1,1,1,1,1)_{[Id]_B}, \, (\alpha, \beta,\beta,\beta,\alpha,\alpha)_{[h]_B}$ and $ (\beta,\alpha,\alpha,\alpha,\beta,\beta)_{[g]_B}$.
\end{exa} 

\begin{exa}\label{ex:DP4} Consider the perfect evolution $\mathbb{C}$-algebra $\AA_{P_{4}}$ for the path graph $P_4$   (see Fig. \ref{FIG:DP4}). Since $P_4$ is  bipartite, Theorem \ref{D1o3} implies that $D \neq \{Id\}$. Then, by Corollary \ref{AlgD}, if $V_{1}=\{1,3\}$, $V_{2}=\{2,4\}$ and \,$1, \alpha, \beta$  are the three cubic roots of the unit in $\mathbb{C}$, then  $\DD= \{Id,h,g\}$     with $ (1,1,1,1)_{[Id]_B}, \, (\alpha, \beta,\alpha,\beta)_{[h]_B}$ and $ (\beta,\alpha,\beta,\alpha)_{[g]_B}$. 

\begin{figure}[h!]
\begin{center}
\begin{tikzpicture}[scale=0.4]
\draw (3.9,0) node[above,font=\footnotesize] {$1$} -- (6.9,0);
\draw (6.9,0) node[above,font=\footnotesize] {$2$} -- (9.9,0);
\draw (9.9,0) node[above,font=\footnotesize] {$3$} -- (12.8,0) node[above,font=\footnotesize] {$4$};
\filldraw [black] (3.9,0) circle (3.5pt);
\filldraw [black] (6.9,0) circle (3.5pt);
\filldraw [black] (9.9,0) circle (3.5pt);
\filldraw [black] (12.9,0) circle (3.5pt);

\hspace{-1.2cm}

\end{tikzpicture}
				\caption{ Path graph $P_4$.}
                 \label{FIG:DP4}
			\end{center}
\end{figure}

   \end{exa}

   \begin{remark}
   Although we have been working on algebraically closed fields of characteristic zero, we can use the same method as in the previous examples to determine the diagonal group over other fields. For example,  in Example \ref{ex:DP4}, if $\mathbb{K}=\mathbb{R}$ or $\mathbb{K}=\mathbb{Z}_3$, then $t_{ii}=1_{\mathbb{K}}$ for all $i$ and  therefore    $\DD= \{Id\}.$
   If $\mathbb{K}=\mathbb{Z}_7$, then $\bar{1},\bar{2}, \bar{4}$ are the cubic roots of the unit and therefore   $\DD= \{Id,h,g\}$
    with $(\bar{1},\bar{1},\bar{1},\bar{1})_{[Id]_B}, \, (\bar{2},\bar{4},\bar{2},\bar{4})_{[h]_B}$ and $ (\bar{4},\bar{2},\bar{4},\bar{2})_{[g]_B}$.
 Thus, the diagonal subgroup depends on the field we are working in, if it is algebraically closed or not.
    \end{remark}
 \subsection{\boldmath{$\mathbf{\sym(\Gamma)}$} acts on $\mathbf{\aut (\AA)}$}\label{acts}
In this section, we will describe 
the set 
$\aut (\mathcal{A})$, for $\AA \in \{\AA_{\Gamma},\AA_{RW(\Gamma)}\}$,
via a free
$\sym(\Gamma)$-action. 
This will allow us to extend an important result about perfect evolution algebras, in \cite{Eld-Lab2015, Eld-Lab2021} and revisited in \cite{CLTV, Sri-Zou2022}, to a family of evolution algebras, not necessarily perfect, in the class of evolution algebras associated to graphs.

\begin{prop}\label{prop:gen1}
Let $\sigma \in \sym(\Gamma)$ and $f \in \aut (\AA)$, with  $f(e_i)=\sum_{k=1}^n t_{ik}e_{k}$ for $t_{ik} \in \mathbb{K}$. If $g(e_i):=\sum_{k=1}^n t_{ik}e_{\sigma(k)}$ then $g \in \aut (\AA)$.
\end{prop}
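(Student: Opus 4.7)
My plan is to show that $g$ is a linear bijection that respects the multiplication on the natural basis. Since $\{e_ie_j\}$ consists only of $e_i^2$'s and zeros, the latter reduces to verifying $g(e_ie_j)=g(e_i)g(e_j)$ in the two cases $i\neq j$ and $i=j$.

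\medskip

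First I would observe that $g = P_\sigma \circ f$, where $P_\sigma$ is the linear map determined by $P_\sigma(e_k)=e_{\sigma(k)}$. Indeed, $P_\sigma\circ f(e_i)=P_\sigma\bigl(\sum_k t_{ik}e_k\bigr)=\sum_k t_{ik}e_{\sigma(k)}=g(e_i)$. Since $P_\sigma$ is the linear extension of a permutation of $B$ (hence invertible) and $f\in\aut(\AA)$ is invertible, $g$ is a linear bijection.

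\medskip

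Next, I would verify multiplicativity. The central trick is to exploit the symmetry property \eqref{symgraph}: for every $k\in V$, the map $m\mapsto\sigma(m)$ sends $\mathcal{N}(k)$ bijectively onto $\mathcal{N}(\sigma(k))$. Using this together with \eqref{e2-grafo} (or its $\AA_{RW(\Gamma)}$ analog), one gets
\[
e_{\sigma(k)}^{\,2}=\sum_{l\in\mathcal{N}(\sigma(k))}c_{\sigma(k)}\,e_l=\sum_{m\in\mathcal{N}(k)}c_k\,e_{\sigma(m)},
\]
where $c_k=1$ in the $\AA_\Gamma$ case and $c_k=1/d_k$ in the $\AA_{RW(\Gamma)}$ case (the equality of the degree factors uses that $\sigma$ preserves degrees). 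For $i\neq j$, expanding $g(e_i)g(e_j)=\sum_k t_{ik}t_{jk}\,e_{\sigma(k)}^{\,2}$ and substituting the expression above puts the sum in exactly the form of \eqref{rem:int1} of Remark \ref{rem:int}, yielding
\[
g(e_i)g(e_j)=\sum_{r=1}^n\Bigl(\sum_{k\in\mathcal{N}(r)}c_k\,t_{ik}t_{jk}\Bigr)e_{\sigma(r)},
\]
and the inner sums vanish by \eqref{eq:propa1} or \eqref{RWeq:propa2} in Proposition \ref{eqayba1a1}. This matches $g(e_ie_j)=g(0)=0$.

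\medskip

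For $i=j$ I would compute both sides independently. On one side,
$g(e_i^2)=g\bigl(\sum_{l\in\mathcal{N}(i)}c_i e_l\bigr)=\sum_{l\in\mathcal{N}(i)}c_i\sum_{k}t_{lk}e_{\sigma(k)},$
which by \eqref{rem:int2} equals $\sum_r\bigl(\sum_{l\in\mathcal{N}(i)}c_i\,t_{lr}\bigr)e_{\sigma(r)}$. On the other side,
$g(e_i)^2=\sum_k t_{ik}^{\,2}\,e_{\sigma(k)}^{\,2},$
and the same symmetry-based rewriting followed by \eqref{rem:int1} gives $\sum_r\bigl(\sum_{k\in\mathcal{N}(r)}c_k\,t_{ik}^{\,2}\bigr)e_{\sigma(r)}$. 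Equality of the two coefficients is precisely the identity \eqref{eq:propb1} (resp.\ \eqref{RWeq:propb2}) provided by Proposition \ref{eqayba1a1}, which completes the multiplicativity check.

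\medskip

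I anticipate the main obstacle to be bookkeeping: one must carefully separate the two algebra cases $\AA_\Gamma$ and $\AA_{RW(\Gamma)}$ because of the different constants $c_k$, and be attentive to the change of summation variable $l=\sigma(m)$ inside $\mathcal{N}(\sigma(k))$ — this is where the symmetry hypothesis on $\sigma$ enters in an essential way, and where a routine calculation could easily go wrong.
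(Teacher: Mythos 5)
Your proposal is correct and follows essentially the same route as the paper's own proof: expand $e_{\sigma(k)}^2$ via the symmetry property \eqref{symgraph}, reindex with Remark \ref{rem:int}, and invoke \eqref{eq:propa1}/\eqref{RWeq:propa2} for $i\neq j$ and \eqref{eq:propb1}/\eqref{RWeq:propb2} for $i=j$. Your explicit factorization $g=P_\sigma\circ f$ to justify bijectivity is a small addition the paper leaves implicit, but it does not change the argument.
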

\begin{proof}
If  $i\neq j$, then $g(e_i e_j)=0$. On the other hand: \vspace{-0.2cm}
$$g(e_i)  g(e_j)=\left(\sum_{k=1}^n t_{ik}e_{\sigma(k)}\right)\left(\sum_{k=1}^n t_{jk}e_{\sigma(k)}\right)  =  \sum_{k=1}^n t_{ik}t_{jk}e_{\sigma(k)}^2 = \vspace{-0.2cm}$$
\begin{equation*} \begin{cases}
\sum \limits_{k=1}^n \left(t_{ik}t_{jk}\sum \limits_{\sigma(q)\in \mathcal{N}(\sigma(k)) }e_{\sigma(q)}\right)=\sum \limits_{k=1}^n\left(\sum \limits_{q\in \mathcal{N}(k)}  t_{ik}t_{jk}  e_{\sigma(q)}\right)  =\sum \limits_{k=1}^n\left(\sum \limits_{k\in \mathcal{N}(q)} t_{ik}t_{jk} \right)e_{\sigma(q)}, \vspace{0.2cm}  \textrm{ if $\AA=\AA_{\Gamma}$,} 
\\
\sum \limits_{k=1}^n \left(\frac{t_{ik}t_{jk}}{d_{\sigma(k)}}\sum \limits_{\sigma(q)\in \mathcal{N}(\sigma(k)) }e_{\sigma(q)}\right)=\sum \limits_{k=1}^n\left(\sum \limits_{q\in \mathcal{N}(k)} \frac{ t_{ik}t_{jk}}{d_{\sigma(k)}}  e_{\sigma(q)}\right)  =\sum \limits_{k=1}^n\left(\sum \limits_{k\in \mathcal{N}(q)} \frac{t_{ik}t_{jk}}{d_k} \right)e_{\sigma(q)}, \vspace{0.2cm} 
 \textrm{ if  $\AA=\AA_{RW(\Gamma)},$} 
\end{cases} 
\end{equation*}
$=0,$

\noindent where the last three equalities, in each case, come from \eqref{lem:int1}, (\ref{symgraph}) and either \eqref{eq:propa1} or \eqref{RWeq:propa2},  respectively, hence $g(e_i  e_j)=g(e_i)  g(e_j)$. 
Now, by  Remark \ref{rem:int}, we observe   that: 
$g(e_i^2)=$
\begin{equation*}\label{eq:propgen11}
\hspace*{-3pt}
\begin{cases}
g\left(\sum \limits_{\ell \in \mathcal{N}(i)} e_{\ell}\right)=\sum \limits_{\ell \in \mathcal{N}(i)} g(e_{\ell})=\sum \limits_{\ell \in \mathcal{N}(i)}\left(\sum \limits_{k=1}^n t_{\ell k} e_{\sigma(k)} \right) =\sum \limits_{r=1}^n\left(\sum \limits_{\ell \in \mathcal{N}(i)} t_{\ell r} \right)e_{\sigma(r)}, \textrm{ if  $\AA=\AA_{\Gamma}$,}\\ \\
g\left(\frac{1}{d_i}\sum \limits_{\ell \in \mathcal{N}(i)} e_{\ell}\right)=\sum \limits_{\ell \in \mathcal{N}(i)} \frac{1}{d_i} g(e_{\ell})=\sum \limits_{\ell \in \mathcal{N}(i)} \frac{1}{d_i} \left(\sum \limits_{k=1}^n t_{\ell k} e_{\sigma(k)}\right)=\sum \limits_{r=1}^n\left(\sum \limits_{\ell \in \mathcal{N}(i)} \frac{1}{d_i} t_{\ell r} \right)e_{\sigma(r)},\textrm{ if  $\AA=\AA_{RW(\Gamma)}.$}
\end{cases}
\end{equation*}
On the other hand, $g(e_i)^2 = \sum \limits_{r=1}^nt_{ir}^2 e_{\sigma(r)}^2 =$

\begin{equation*}\label{eq:propgen12}
\hspace*{-5pt}
\begin{cases}
  \sum \limits_{r=1}^nt_{ir}^2\left( \sum \limits_{{\sigma(q)} \in \mathcal{N}({\sigma(r)})}e_{{\sigma(q)}}\right)=\sum \limits_{r=1}^n  \left(\sum \limits_{q\in \mathcal{N}(r)} t_{ir}^2  e_{\sigma(q)}\right)= \sum \limits_{r=1}^n \left(\sum \limits_{k\in \mathcal{N}(r)} t_{ik}^2 \right) e_{\sigma(r)}, \textrm{ if  $\AA=\AA_{\Gamma}$,}\\\\
   \sum \limits_{r=1}^nt_{ir}^2\left( \sum \limits_{{\sigma(q)} \in \mathcal{N}({\sigma(r)})} \frac{1}{d_{\sigma(r)}}e_{{\sigma(q)}}\right) 
 =\sum \limits_{r=1}^n  \left(\sum \limits_{q\in \mathcal{N}(r)} \frac{1}{d_r} t_{ir}^2  e_{\sigma(q)}\right)= \sum \limits_{r=1}^n \left(\sum \limits_{k\in \mathcal{N}(r)} \frac{1}{d_r} t_{ik}^2 \right) e_{\sigma(r)}, \textrm{ if  $\AA=\AA_{RW(\Gamma)},$}
 \end{cases}
\end{equation*}
 where the last two equalities, in each case,   come from (\ref{symgraph}) and (\ref{rem:int1}). Then  by  Proposition  \ref{eqayba1a1} (in \eqref{eq:propb1} for $\AA=\AA_{\Gamma}$, in \eqref{RWeq:propb2} for $\AA=\AA_{RW(\Gamma)}$) we have $g(e_i^2)=g(e_i)^2.$
\end{proof} \vspace{0.2cm}

We will denoted by $f_{ \langle \sigma \rangle }$  the automorphism defined in  Proposition \ref{prop:gen1} 
from $f \in \aut (\AA)$ 
and $\sigma \in \sym(\Gamma)$, i.e., if $f(e_i)=\sum_{k=1}^n t_{ik}e_{k}$, then  
$$f_{ \langle \sigma \rangle }(e_i):=\sum_{k=1}^n t_{ik}e_{\sigma(k)}$$
and we will say that $f_{ \langle \sigma \rangle }$ is {\it generated} by $f$ and $\sigma$.

\begin{remark}\label{matrices2}
Notice that $[f_{\langle \sigma \rangle}]_{B}$ is obtained from $[f]_{B}$ via the column permutation defined by $\sigma$.
\end{remark}

Let ${\rm id}_{\Gamma} \in \sym(\Gamma)$ denote the identity automorphism and $\AA \in \{\AA_{\Gamma},\AA_{RW(\Gamma)}\}$.
Notice that  every  $f \in \aut (\AA)$ is generated by $f$ and ${\rm id}_{\Gamma}$. However,  we can find  $\sigma \in \sym(\Gamma) \setminus \{{\rm id}_{\Gamma}\}$ and  $g \in \aut (\AA) \setminus \{f\}$ such that $f=g_{ \langle \sigma \rangle }$. In fact, if $f(e_i)=\sum_{k=1}^n t_{ik}e_{k}$, then for all $\mu \in \sym(\Gamma)$, $g=f_{ \langle \mu \rangle } \in \aut(\AA)$ and $f=g_{ \langle \mu^{-1} \rangle} $. Hence, associate to $f\in \aut (\AA)$  we have the finite set $\{f_{ \langle \sigma \rangle }\}_{\sigma \in \sym(\Gamma)}$ with cardinality  $|\sym(\Gamma)|$, because $f_{ \langle \sigma_1 \rangle } \neq f_{ \langle \sigma_2 \rangle }$, as long as, $\sigma_1 \neq \sigma_2$. 

\begin{exa}
  For the path graph $P_3$ (see Fig. \ref{FIG:P3(a)}), it is clear that $\mu=(12) \notin  \sym(\Gamma)$. For $Id \in \aut (\AA_{\Gamma})$, if $g:=Id_{ \langle \mu \rangle }$ then $g \notin \aut(\AA_{\Gamma})$ since $g(e_3^2)=e_1$ while $g(e_3)^2=e_2$.
\end{exa}

The example above shows the importance of the Proposition \ref{prop:gen1} for the good definition of the action  in the next theorem.

\begin{theorem}\label{action} 
 $ \sym(\Gamma)$ acts freely on  $ \aut (\AA)$ by
$$\begin{array}{cccclll}
 & \sym(\Gamma) \times \aut(\AA) & \longrightarrow & \aut(\AA)        &     &                 &  \vspace{0.2cm} \\
         & (\sigma,f) & \longmapsto          & \sigma.f:=f_{ \langle \sigma \rangle }
\end{array} $$ 
\begin{proof} 
It is an action: 
\begin{enumerate}[noitemsep, leftmargin=20pt]
\item  Let ${\rm id}_{\Gamma} \in \sym(\Gamma) $ denote the identity automorphism. Then for all  $f \in \aut(\AA) $ we have ${\rm id}_{\Gamma}.f=f_{ \langle {\rm id}_{\Gamma} \rangle }=f$.
\item Let $\gamma, \sigma \in \sym(\Gamma)$ and $f \in \aut(\AA) $, with $f(e_i)=\sum_{k=1}^n t_{ik}e_{k}$. Then  $$((\gamma \circ \sigma).f)(e_i)=f_{ \langle \gamma \circ \sigma \rangle }(e_i)=\sum_{k=1}^n t_{ik}e_{(\gamma \circ \sigma)(k)}=\sum_{k=1}^n t_{ik}e_{\gamma (\sigma(k))}=(\sigma . f)_{\langle \gamma \rangle}(e_i)=\gamma.(\sigma.f)(e_i).$$ 
\end{enumerate}
It is free:  suppose that there exists $\gamma  \in \sym(\Gamma)$ such that  $f_{\langle \gamma \rangle}=f$. Then, $[f_{\langle \gamma \rangle}]_B=[f]_B$ and this implies $\gamma={\rm id}_{\Gamma}$. In a contrary case,  there are in $[f]_B$ at least two equal columns, but this is a contradiction with the non-singularity of $[f]_B$.
    \end{proof} 
\end{theorem}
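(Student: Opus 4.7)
The plan is to split the statement into three pieces: verify that $\sigma.f := f_{\langle \sigma \rangle}$ genuinely lands in $\aut(\AA)$, check the two group-action axioms, and establish that no non-identity $\gamma \in \sym(\Gamma)$ fixes any $f \in \aut(\AA)$. The first piece is already paid for by Proposition \ref{prop:gen1}, which directly gives $f_{\langle \sigma \rangle} \in \aut(\AA)$ for every $f$ and every $\sigma$; no algebra-automorphism check needs to be repeated.

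For the action axioms, writing $f(e_i) = \sum_k t_{ik} e_k$, the formula $f_{\langle {\rm id}_{\Gamma} \rangle}(e_i) = \sum_k t_{ik} e_{{\rm id}_{\Gamma}(k)} = f(e_i)$ handles the identity axiom instantly. For compatibility with composition, I would evaluate $((\gamma \circ \sigma).f)(e_i) = \sum_k t_{ik} e_{\gamma(\sigma(k))}$ on one side and, on the other, feed $\sigma.f$ back into the defining recipe with $\gamma$; both sides collapse to the same sum after relabelling. No hidden compatibility with multiplication in $\AA$ intervenes here, since Proposition \ref{prop:gen1} already guarantees that the intermediate object $\sigma.f$ is itself an algebra automorphism.

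The only substantive step is freeness. Suppose $f_{\langle \gamma \rangle} = f$ for some $\gamma \in \sym(\Gamma)$ and $f \in \aut(\AA)$. By Remark \ref{matrices2}, $[f_{\langle \gamma \rangle}]_B$ is obtained from $[f]_B$ by permuting columns according to $\gamma$, so the hypothesis forces column $k$ and column $\gamma(k)$ of $[f]_B$ to coincide for every $k \in V$. If $\gamma$ were not ${\rm id}_{\Gamma}$, some $k \in V$ would satisfy $\gamma(k) \neq k$, producing two identical columns and contradicting the non-singularity of $[f]_B$ (which holds since $f$ is an automorphism). Hence $\gamma = {\rm id}_{\Gamma}$, which is precisely the definition of a free action. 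I expect no genuine obstacle anywhere in the argument: the identity and compatibility axioms are bookkeeping on the index $\sigma(k)$, and freeness reduces to the elementary linear-algebra fact that an invertible matrix has no two equal columns.
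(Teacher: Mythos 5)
Your proposal is correct and follows essentially the same route as the paper: well-definedness via Proposition \ref{prop:gen1}, the two action axioms by direct computation on the index $\sigma(k)$, and freeness by observing (as in Remark \ref{matrices2}) that $f_{\langle \gamma \rangle}=f$ with $\gamma \neq {\rm id}_{\Gamma}$ would force two equal columns in the invertible matrix $[f]_B$. No gaps.
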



We will denote by $O_{f}$ the orbit of $f \in \aut(\AA)$  by the action, i.e.,
$$O_f:=\{f_{\langle \sigma \rangle} \, \, | \, \, \sigma \in \sym(\Gamma) \}.$$ 
    
\begin{prop} \label{Orbisom} For all $f \in \aut(\AA)$, the orbit $O_f$ has a group structure and it is isomorphic to $\sym(\Gamma)$.
    \end{prop}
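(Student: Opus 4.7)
The plan is to exhibit an explicit bijection $\phi_f\colon \sym(\Gamma)\to O_f$ and then transport the group structure of $\sym(\Gamma)$ to $O_f$ through it. By the very definition of the orbit, the assignment
\[
\phi_f\colon \sym(\Gamma)\longrightarrow O_f,\qquad \sigma\longmapsto f_{\langle \sigma\rangle}
\]
is surjective. I would first argue that it is injective: if $\sigma_1\neq \sigma_2$, then by Remark \ref{matrices2} the matrices $[f_{\langle\sigma_1\rangle}]_B$ and $[f_{\langle\sigma_2\rangle}]_B$ are obtained from $[f]_B$ by two distinct column permutations. If these matrices coincided, $[f]_B$ would have two equal columns and therefore be singular, contradicting $f\in\aut(\AA)$. (Alternatively, this injectivity is exactly the freeness of the action proved in Theorem \ref{action}.) Hence $\phi_f$ is a bijection.

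Next, I would define a binary operation on $O_f$ by
\[
f_{\langle \sigma_1\rangle}\ast f_{\langle \sigma_2\rangle}\;:=\;f_{\langle \sigma_1\circ \sigma_2\rangle},
\]
which is well-defined precisely because $\phi_f$ is a bijection (so every element of $O_f$ has a unique label $\sigma\in\sym(\Gamma)$). The associativity, the existence of the identity $f=f_{\langle {\rm id}_\Gamma\rangle}$, and the existence of inverses $(f_{\langle\sigma\rangle})^{-1}=f_{\langle\sigma^{-1}\rangle}$ are inherited verbatim from $\sym(\Gamma)$. With this operation, $\phi_f$ becomes a group isomorphism essentially by construction:
\[
\phi_f(\sigma_1\circ\sigma_2)=f_{\langle\sigma_1\circ\sigma_2\rangle}=f_{\langle\sigma_1\rangle}\ast f_{\langle\sigma_2\rangle}=\phi_f(\sigma_1)\ast\phi_f(\sigma_2).
\]

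There is no genuine obstacle; the only subtle point is checking that the proposed product on $O_f$ does not depend on the choice of representatives, and this is automatic since each element of $O_f$ admits a \emph{unique} expression $f_{\langle \sigma\rangle}$. It is worth remarking that in general one should not expect $O_f$ to be closed under composition in $\aut(\AA)$ (so $O_f$ is not a subgroup of $\aut(\AA)$ in the obvious sense); the group structure we produce is the transported one, which is the natural one in light of the second axiom of the action in Theorem \ref{action}.
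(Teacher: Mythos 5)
Your proposal is correct and follows essentially the same route as the paper: the same transported operation $f_{\langle \sigma\rangle}\ast f_{\langle\gamma\rangle}:=f_{\langle\sigma\circ\gamma\rangle}$ and the same bijection with $\sym(\Gamma)$ (you just orient the map from $\sym(\Gamma)$ to $O_f$ rather than the reverse), with injectivity coming from the freeness of the action exactly as in Theorem \ref{action}. Your explicit remark that well-definedness of $\ast$ rests on the uniqueness of the label $\sigma$ is a point the paper leaves implicit, but it does not change the argument.
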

    \begin{proof}
    Let us define in $O_f$ the following binary operation:
    $$\begin{array}{cccclll}
\ast :& O_f \times O_f & \longrightarrow & O_f        &     &                 &  \vspace{0.2cm} \\
         & (f_{\langle \sigma \rangle },f_{\langle \gamma \rangle }) & \longmapsto          & f_{\langle \sigma \rangle }\ast f_{\langle \gamma \rangle }:=f_{ \langle \sigma \circ \gamma \rangle }.
\end{array} $$
\begin{enumerate}[noitemsep, leftmargin=20pt]
\item It is straightforward from definition of $\ast$ that $f=f_{\langle {\rm id}_{\Gamma} \rangle}$ is the identity element.
\item For $f_{\langle \sigma \rangle }\in O_f$, the element $f_{\langle \sigma^{-1} \rangle }$ is its inverse.
\item The associativity of $\ast$ is guaranteed by the associativity of the composition in $\sym(\Gamma).$ 
\end{enumerate}
Hence, $(O_f,\ast)$ is a group. Moreover, the map:
$$\begin{array}{cccclll}
\phi :& O_f  & \longrightarrow &    \sym(\Gamma)     &     &                 &  \vspace{0.2cm} \\
         & f_{\langle \sigma \rangle } & \longmapsto          & \sigma
\end{array} $$ is an isomorphism. From the definition, it is obviously a surjection and since the action is free, the map is also an injection. Moreover $$\phi(f_{\langle \sigma \rangle }\ast f_{\langle \gamma \rangle })=\phi(f_{ \langle \sigma \circ \gamma \rangle })=\sigma \circ \gamma=\phi(f_{\langle \sigma \rangle } )\circ \phi(f_{\langle \gamma \rangle }). \vspace{-0.6cm}$$
    \end{proof}
From the previous results, the set $\aut(\AA)$ can be described by the disjoint union of copies (as many as there are orbits) of $\sym(\Gamma) $. Hence,  a more precise description of $\aut(\AA)$ depends on a good set of representatives of the orbits. For example, if the action is transitive, we will have $\aut(\AA) \cong \sym(\Gamma)$. This is the case of perfect evolution $\mathbb{R}-$algebras.

 We will see that there is also a good description whenever for a given   $f \in \aut(\AA)$ there is $ \sigma \in \sym(\Gamma)$, depending on $f$, such that: 
\begin{equation}\label{inducidos}
f(e_i)=\mu_i e_{\sigma(i)},
\end{equation} 
with  $\mu_i \in \mathbb{K}^*, \textrm{ for }i=1,\dots,n.$ 
We will say that $f \in \aut(\AA)$ is {\it induced by a symmetry of the graph} if it satisfies
 \eqref{inducidos}.
This is the case of automorphisms of perfect evolution algebras (see \cite{Eld-Lab2015}), but it is not exclusive to them, as shown in the next example.

 \begin{exa}\label{ex:indper}
   For the graph  $P_3$  (see Fig. \ref{FIG:P3(a)}), 
     we have $\sym(P_3)=\{\rho_0, \rho_1\} \leq S_3$ with $\rho_0=(1)(2)(3)$ and $ \rho_1=(13)(2)$. Also, the evolution $\mathbb{C}$-algebra $\AA_{P_3}$, with natural basis $B=\{e_i\}_{i=1}^3$, has singular structure matrix given by:  \begin{equation*}
M_B= \begin{pmatrix}
    0 & 1 & 0 \\
    1 & 0 & 1 \\
    0 & 1 & 0
\end{pmatrix},
\end{equation*}
then $\AA_{P_3}$ is not perfect.
Moreover, we will show that  every $f \in\aut(\AA_{P_3})$ is induced by a symmetry of $P_3$. 
    Let $[f]_{B} =(t_{ik})_{i,k=1}^3$,  we want to know the possibilities for $t_{ik}$. By
    \eqref{colg2} in Corollary \ref{GemyColg},  
 \begin{equation} \label{eq:p3max1}
     t_{i 2}\neq 0, \hspace{0.2cm} \textrm{for exactly one} \hspace{0.2cm} i\in V. \vspace{0.2cm} 
 \end{equation}  
 
    Since $\mathcal{N}(1)=\mathcal{N}(3)$, then in \eqref{gemelos2} of Corollary \ref{GemyColg},  if $i=3$, then $t_{12}^2=t_{32}^2$ which together with \eqref{eq:p3max1} give: 
 \begin{equation} \label{eq:p3t2} t_{12}=t_{32}=0 \hspace{0.3cm} \textrm{and} \hspace{0.3cm} t_{22} \neq 0.
 \end{equation}
These, with $i=1$ and  $r=1,3$  in \eqref{eq:propb1} imply
 \begin{equation} \label{eq:p3t3}
  t_{21}=t_{23}=0.
  \end{equation}
  
 On the other hand, if $i=2$ and $r=1,3$ in \eqref{eq:propb1}, then 
\begin{equation} \label{eq:t22}
        t_{22}^2=t_{11}+t_{31} \hspace{0.5cm} \textrm{and} \hspace{0.5cm}  t_{22}^2=t_{13}+t_{33}, 
    \end{equation}
 and if $i=1,3$ and $r=2$ in \eqref{eq:propb1}, then 
\begin{equation} \label{eq:t11}
        t_{11}^2+ t_{13}^2=t_{22} \hspace{0.5cm} \textrm{and} \hspace{0.5cm}  t_{31}^2+t_{33}^2=t_{22}.
    \end{equation}
    In addition if $r=2$ in \eqref{eq:propa1}, 
    \begin{equation} \label{eq:t13}
        t_{i1}t_{j1}+t_{i3}t_{j3}=0 \hspace{0.5cm} \textrm{for all} \hspace{0.5cm} i\neq j. 
    \end{equation}
    Replacing \eqref{eq:t22} in \eqref{eq:t11} we obtain that
\begin{equation*}\label{eq:P31}
   t_{22}= (t_{22}^2-t_{31})^2+ t_{13}^2=t_{22}^4-2t_{22}^2t_{31}+t_{31}^2+t_{13}^2
\end{equation*}
and 
\begin{equation*}\label{eq:P32}
   t_{22}= (t_{22}^2-t_{13})^2+ t_{31}^2=t_{22}^4-2t_{22}^2t_{13}+t_{13}^2+t_{31}^2.
\end{equation*}
Then, by  \eqref{eq:p3t2}: $t_{13}=t_{31}$  and by \eqref{eq:t22}: $t_{11}=t_{33}$, and we have:
\begin{equation*}
[f]_B= \begin{pmatrix}
    t_{11} & 0 & t_{13} \\
   0  & t_{22}  & 0  \\
    t_{13}  & 0  & t_{11} 
\end{pmatrix}.
\end{equation*}
Notice that, if $i=1$ and $j=3$ in \eqref{eq:t13} then $0=t_{11}t_{31}+t_{13}t_{33}=2t_{11}t_{13}$. Since $f$ is a bijection, either $t_{11} \neq 0$ or $t_{13} \neq 0$. Hence,  all $f \in \aut(\AA_{P_3})$ is induced by a symmetry of $P_3$.
\end{exa}
 As stated in the introduction,
an evolution algebra with natural basis $\{e_i\}_{i=1}^n$ is {\it  2LI} if $\{e_i^2,e_j^2\}$ is linearly independent for $i \neq j$. In   \cite{BYM, Eld-Lab2021, CH}, it was shown that if a non-degenerate ($e_i^2\neq 0$ for all $i$)   evolution algebra  is {\it 2LI} then all its automorphisms are induced by symmetries of the associated graph. Since $\AA_{P_3}$ is not {\it 2LI}  then 
Example \ref{ex:indper} shows  that,
 in $\{\AA_{\Gamma}, \AA_{RW(\Gamma)}\}_{\Gamma \in \mathscr{G}}$, 
the family of  algebras whose automorphisms are all induced by symmetries of the associated graph, strictly contains the  family   of non-degenerate evolution algebras with the {\it 2LI} property. In particular, it contains strictly the family of  those that are perfect.
\begin{theorem}\label{orbitas} 
Let $\DD \leq \aut(\AA)$ be the diagonal subgroup, for $\AA \in \{\AA_{\Gamma},\AA_{RW(\Gamma)}\}$. If every element of $\aut(\AA)$  is induced by a symmetry of $\Gamma$ then $\aut(\AA)= \bigsqcup_{f\in \DD} O_f.$ Moreover,  $\aut(\AA)$ is finite and
    $|\aut(\AA)|=|\DD||\sym(\Gamma)|$.
    \end{theorem}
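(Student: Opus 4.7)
By Theorem \ref{action}, $\sym(\Gamma)$ acts freely on $\aut(\AA)$, so the orbits partition $\aut(\AA)$ and each orbit has exactly $|\sym(\Gamma)|$ elements (Proposition \ref{Orbisom}). Thus it suffices to prove two things: (a) every $f \in \aut(\AA)$ lies in an orbit $O_h$ with $h \in \DD$, and (b) distinct elements of $\DD$ produce distinct orbits. The counting formula is then immediate.

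For (a), fix $f \in \aut(\AA)$. By hypothesis $f$ is induced by a symmetry, so $f(e_i) = \mu_i e_{\sigma(i)}$ for some $\sigma \in \sym(\Gamma)$ and scalars $\mu_i \in \KK^*$ (nonzero because $f$ is invertible). I propose the candidate $h \in \mathcal{L}(\AA)$ defined by $h(e_i) := \mu_i e_i$. By construction, $h_{\langle \sigma \rangle}(e_i) = \mu_i e_{\sigma(i)} = f(e_i)$, so once we know $h \in \DD$ we get $f = h_{\langle \sigma \rangle} \in O_h$. To verify $h \in \DD$, by Proposition \ref{eqaybDiag} it is enough to check that $\mu_i^2 = \mu_\ell$ for every $\ell \in \mathcal{N}(i)$ (conditions (1) and (3) of that proposition are automatic once (2) holds, by the connectedness argument used in its proof). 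To obtain this relation, expand both sides of $f(e_i^2) = f(e_i)^2$ using the formulas \eqref{e2-grafo}: the left side is $\sum_{\ell \in \mathcal{N}(i)} c_i \mu_\ell e_{\sigma(\ell)}$ and the right side is $\mu_i^2 \sum_{m \in \mathcal{N}(\sigma(i))} c_{\sigma(i)} e_{m}$, where $c_i \in \{1, 1/d_i\}$ depending on which algebra we consider. Since $\sigma$ is a graph symmetry, $d_i = d_{\sigma(i)}$ and the bijection $\ell \mapsto \sigma(\ell)$ identifies $\mathcal{N}(i)$ with $\mathcal{N}(\sigma(i))$, so comparing coefficients of $e_{\sigma(\ell)}$ yields $\mu_\ell = \mu_i^2$ for every $\ell \in \mathcal{N}(i)$, as required.

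For (b), suppose $h, h' \in \DD$ satisfy $O_h = O_{h'}$. Then $h' = h_{\langle \sigma \rangle}$ for some $\sigma \in \sym(\Gamma)$, so if $h(e_i) = t_{ii} e_i$ then $h'(e_i) = t_{ii} e_{\sigma(i)}$. Since $h'$ is diagonal and every $t_{ii} \neq 0$, comparison of basis coefficients forces $\sigma(i) = i$ for all $i$, i.e., $\sigma = {\rm id}_\Gamma$ and $h = h'$. Equivalently, this is the freeness of the action applied to the stabilizer of a diagonal automorphism: if $h_{\langle \sigma \rangle} \in \DD$ then $\sigma$ must be the identity.

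Combining (a) and (b), the map $\DD \to \{\text{orbits}\}$, $h \mapsto O_h$, is a bijection, so $\aut(\AA) = \bigsqcup_{h \in \DD} O_h$. Finiteness of $\DD$ (it has order $1$ or $3$ by Corollary \ref{AlgD}) together with finiteness of $\sym(\Gamma)$ gives $|\aut(\AA)| = |\DD|\,|\sym(\Gamma)|$. The main point of the argument, and the one requiring real work, is step (a): constructing the diagonal representative $h$ and verifying the compatibility $\mu_i^2 = \mu_\ell$ from the square-preservation equation, using both the symmetry property of $\sigma$ and the explicit product formulas of $\AA_\Gamma$ or $\AA_{RW(\Gamma)}$.
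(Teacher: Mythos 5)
Your proof is correct and follows essentially the same route as the paper: every automorphism, being induced by a symmetry $\sigma$, is moved into $\DD$ by acting with $\sigma^{-1}$, distinct diagonal automorphisms have disjoint orbits by freeness, and the count follows from Proposition \ref{Orbisom}. The only difference is that your hands-on verification that $h(e_i)=\mu_i e_i$ satisfies the conditions of Proposition \ref{eqaybDiag} is already guaranteed by Proposition \ref{prop:gen1} applied to $f$ and $\sigma^{-1}$ --- the paper simply sets $h:=f_{\langle \sigma^{-1}\rangle}$ and observes it is a diagonal automorphism --- so that computation, while correct, could be shortcut.
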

    \begin{proof}
       Let $g \in \aut(\AA)$ and $\sigma \in \sym(\Gamma)$ such that $g(e_i)=u_ie_{\sigma(i)}.$ Notice that  $h:=g_{\langle \sigma^{-1}\rangle} \in \DD$ and $O_h=O_g$. Thus, for every $g \in \aut(\AA)$,  there is $h \in \DD$ such that $g \in O_h$. Then $ \aut(\AA) \subseteq \bigcup_{f \in \DD}O_f$ and from the action  $ \bigcup_{f \in \DD}O_f \subseteq  \aut(\AA) $, hence $\aut(\AA)= \bigcup_{f\in \DD} O_f$. To show that it is a disjoint union, suppose that there exist $f,g \in \DD$ with $f \neq g$, we claim that $O_{f} \cap O_{g}= \emptyset.$ If not, there exist $\sigma_1, \sigma_2 \in \sym(\Gamma)$ such that $h={f}_{\langle \sigma_1 \rangle}=g_{\langle \sigma_2 \rangle}$ then ${f}_{\langle \sigma_1 \rangle}(e_i)= a_i e_{\sigma_1(i)}= b_i e_{\sigma_2(i)}=g_{\langle \sigma_2 \rangle}(e_i)$ for all $i$, which implies  $f=g$ and this is a contradiction.  
   
   On the other hand, since  $\aut(\AA)= \bigsqcup_{f\in \DD} O_f$ is finite and, from Proposition \ref{Orbisom}, for each $f$ we have  $O_f \cong \sym(\Gamma)$, then $|O_f|=|\sym(\Gamma)|$ and  $|\aut(\AA)|=|\DD||\sym(\Gamma)|$. \end{proof}
   \begin{exa} \label{ex:autp3}
 In Example \ref{ex:indper}, we show that every automorphism of $\AA(P_3)$ is induced either  by $\rho_0=(1)(2)(3)$ or by $ \rho_1=(13)(2)$, the symmetries of $P_3$. Hence, we can use Theorem \ref{orbitas} to exhibit ${\rm Aut}(\AA(P_3))$. 
 Since $P_3$ is  bipartite, Theorem \ref{D1o3} implies that $\DD \neq \{Id\}$. Then, by Corollary \ref{AlgD}, if $V_{1}=\{1,3\}$, $V_{2}=\{2\}$ and \,$1, \alpha, \beta$  are the three cubic roots of the unit in $\mathbb{C}$, then 
    $\DD= \{Id, h,g \}$
    with $ (1,1,1)_{[Id]_B}, \, (\alpha, \beta,\alpha)_{[h]_B}$ and $ (\beta,\alpha,\beta)_{[g]_B}$.     
   Hence 
   $O_{Id}=\{Id_{\langle \rho_0 \rangle}, Id_{\langle \rho_1 \rangle }\}, \, O_{h}=\{h_{\langle \rho_0 \rangle}, h_{\langle \rho_1 \rangle }\}, \, \, O_{g}=\{g_{\langle \rho_0 \rangle}, g_{\langle \rho_1 \rangle }\} , $
   then from Theorem \ref{orbitas}, 
   $$\aut(\AA_{P_3})=O_{Id} \cup O_{h} \cup O_{g}= $$ 
 \begin{eqnarray*}
  \left\{ \begin{pmatrix}   
 1 & 0 & 0  \\ 0 &  1 & 0  \\ 0 & 0 & 1    %
   \end{pmatrix} , \, \begin{pmatrix}   
  0 & 0 & 1 \\  0 & 1 & 0 \\  1 & 0 & 0   %
   \end{pmatrix}, \, \begin{pmatrix}   
 \alpha & 0 & 0  \\ 0 &  \beta & 0  \\ 0 & 0 & \alpha  %
   \end{pmatrix},\, \begin{pmatrix}   
  0 & 0 & \alpha \\   0 & \beta & 0 \\  \alpha & 0 & 0  %
   \end{pmatrix},   \,   \begin{pmatrix}   
 \beta & 0 & 0  \\ 0 &  \alpha & 0  \\ 0 & 0 & \beta   %
   \end{pmatrix}, \begin{pmatrix}   
  0 & 0 & \beta \\   0 & \alpha & 0 \\  \beta & 0 & 0  %
   \end{pmatrix} \,  \right\}. 
   \end{eqnarray*} 
  \end{exa}\vspace{0.3cm}

 The following corollary is straightforward from Theorem \ref{D1o3}  and Theorem \ref{orbitas}. 
  \begin{corollary}\label{trans} Let $\AA \in \{\AA_{\Gamma},\AA_{RW(\Gamma)}\}$ an evolution $\mathbb{K}$-algebra associated to $\Gamma$, for $\mathbb{K}$ algebraically closed.
  If every element of $\aut(\AA)$   is induced by a symmetry of $\Gamma$ then the following are equivalent:
  \begin{enumerate}[noitemsep, leftmargin=25pt]
  \item $ \aut (\AA) \cong \sym(\Gamma) $ 
  \item  $\DD=\{Id\}$ 
  \item $\Gamma$ is non-bipartite.
  \end{enumerate}
   \end{corollary}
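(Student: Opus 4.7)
The statement is a three-way equivalence, and the strategy is to decouple it into the immediate equivalence (2)$\Leftrightarrow$(3) and the pair of implications (2)$\Leftrightarrow$(1), each of which is a one-line consequence of the machinery already built.

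First I would dispatch (2)$\Leftrightarrow$(3). This is nothing more than Theorem \ref{D1o3} contrapositively stated: over an algebraically closed field $\mathbb{K}$, one has $\DD \neq \{Id\}$ iff $\Gamma$ is bipartite, hence $\DD = \{Id\}$ iff $\Gamma$ is non-bipartite. No hypothesis on the automorphisms being induced by symmetries is even required here.

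Next I would prove (2)$\Rightarrow$(1). Assuming every element of $\aut(\AA)$ is induced by a symmetry of $\Gamma$, Theorem \ref{orbitas} gives the orbit decomposition $\aut(\AA)=\bigsqcup_{f\in \DD} O_f$. If $\DD=\{Id\}$, this collapses to $\aut(\AA)=O_{Id}$, and by Proposition \ref{Orbisom} we have $O_{Id}\cong \sym(\Gamma)$, yielding $\aut(\AA)\cong \sym(\Gamma)$.

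For (1)$\Rightarrow$(2), I would argue by cardinalities. Since $\Gamma$ is finite, $\sym(\Gamma)$ is finite, so the hypothesis $\aut(\AA)\cong \sym(\Gamma)$ gives $|\aut(\AA)|=|\sym(\Gamma)|$. On the other hand Theorem \ref{orbitas} yields $|\aut(\AA)|=|\DD|\,|\sym(\Gamma)|$, forcing $|\DD|=1$ and hence $\DD=\{Id\}$. There is no real obstacle here; the only point to verify is that the isomorphism in (1) is a set/group isomorphism of finite sets so that cardinalities can be compared, which is immediate from the finiteness of $\sym(\Gamma)$.
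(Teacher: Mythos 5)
Your proposal is correct and follows exactly the route the paper intends: the paper gives no written proof beyond noting the corollary is ``straightforward from Theorem \ref{D1o3} and Theorem \ref{orbitas}'', and your argument---(2)$\Leftrightarrow$(3) from Theorem \ref{D1o3}, (2)$\Rightarrow$(1) from the orbit decomposition together with Proposition \ref{Orbisom}, and (1)$\Rightarrow$(2) by comparing $|\aut(\AA)|=|\DD|\,|\sym(\Gamma)|$ with $|\sym(\Gamma)|$---is precisely the intended filling-in of those details.
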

  For perfect evolution algebras, this result  is not surprising considering the works in \cite{CLTV, CMTV,Eld-Lab2015, Eld-Lab2021, Sri-Zou2022}. However, the following example shows the existence of non-perfect evolution algebras satisfying  Corollary \ref{trans}.  
  \begin{exa} 
The Bull graph $B_{5}$ (see Fig. \ref{FIG:B5}) is  non-bipartite, by Lemma \ref{oddcycle}. Let $\AA_{B_{5}}$ be the associated evolution $\mathbb{C}$-algebra with natural basis $B=\{e_i\}_{i=1}^5$ and structure matrix $M_B$. 
\begin{figure}[htp]
\hspace{-6.5cm}
\begin{tikzpicture}[scale=0.35]
\filldraw [black] (4,0) circle (3.5pt);
\filldraw [black] (0,0) circle (3.5pt);
\filldraw [black] (2.1,-2) circle (3.5pt);
\filldraw [black] (-0.75,1.5) circle (3.5pt);
\filldraw [black] (4.7,1.5) circle (3.5pt);
\draw (-0.75,1.5) node[above,font=\footnotesize] {$1$} -- (0,0); 
\draw (4.7,1.5) node[above,font=\footnotesize] {$2$} -- (4,0);
\draw (-0.9,-1.4) node[above,font=\footnotesize] {$3$};
\draw (0,0) node[above,font=\footnotesize] {$\,$} - -  (4,0);
\draw (4.6,-1.4) node[above,font=\footnotesize] {$4$};
\draw (4,0) node[below,font=\footnotesize] {$\,$} -- (2.1,-2);
\draw (0,0) node[below,font=\footnotesize] {$\,$} -- (2.1,-2);
\draw (2.1,-3.4) node[above,font=\footnotesize] {$5$};
\hspace{4.2cm}
$ M_B= \begin{pmatrix}
    0 & 0 & 1 & 0 & 0 \\
    0 & 0 & 0 & 1 & 0 \\
    1 & 0 & 0 & 1 & 1 \\
    0 & 1 & 1 & 0 & 1 \\
    0 & 0 & 1 & 1 & 0 
\end{pmatrix}
$
\end{tikzpicture} 
\caption{ Bull graph $B_{5}$ and its structure matrix $M_B$.}\label{FIG:B5}
\end{figure} 
Since $M_B$ is singular then $\AA_{B_{5}}$
   is not perfect. 
Moreover, since  $\AA_{B_{5}}$ is
 non-degenerate and  $\{e_i^2,e_j^2\}$ is linearly independent for $i\neq j$  then $\AA_{B_{5}}$ is {\it 2LI}. Hence, from \cite[Corollary 2.7] {BYM},  $\AA_{B_{5}}$
 has  unique basis which implies, by \cite{BYM} and \cite{CH}, that its automorphisms are all induced by permutations in $\sym(B_5)$. Then, from Corollary \ref{trans}, $\aut(\AA_{B_{5}}) \cong \sym(B_5) \cong \mathbb{Z}_2.$
\end{exa}

\section{Relations between the automorphisms groups of $\AA_{\Gamma}$ and $\AA_{RW(\Gamma)}$ }\label{SRW}  
This section is devoted to pointing out some immediate consequences of the results in \S \ref{Res} about relations between the automorphism groups of $\AA_{\Gamma}$ and $\AA_{RW(\Gamma)}$.
Although these algebras are associated to the same graph  $\Gamma$, they are not necessarily isomorphic. 
Indeed, 
in \cite{PMPT} 2021, for a finite and non-singular graph $\Gamma$, it was shown that $\mathcal{A} _{\Gamma}\cong \mathcal{A}_{RW(\Gamma)}$ if and only if $\Gamma$ is a regular or a birregular graph.
In such cases, it is clear that $\aut(\AA_{RW(\Gamma)}) \cong \aut(\AA_{\Gamma})$. 
But, from the results in \S \ref{Res}, we will show that  it is not a necessary condition. 

\begin{exa}
The cycle graph $C_6$ (see Fig. \ref{FIG:C6})  is regular and non-singular, then by \cite[Theorem 2.3]{PMPT}, $\AA_{C_6} \cong \AA_{RW(C_6)}$ and thus $\aut(\AA_{RW(C_6)}) \cong \aut(\AA_{C_6})$. 
\begin{figure}[htp]
\begin{center}
\begin{tikzpicture}[scale=0.4]
\filldraw [black] (2.6,2) circle (3.5pt);
\filldraw [black] (1.3,0) circle (3.5pt);
\filldraw [black] (6.2,0) circle (3.5pt);
\filldraw [black] (2.6,-2) circle (3.5pt);
\filldraw [black] (4.9,2) circle (3.5pt);
\filldraw [black] (4.9,-2) circle (3.5pt);

\draw (4.9,2) node[above,font=\footnotesize] {$1$} -- (2.6,2); 
\draw (4.9,2) node[above,font=\footnotesize] {$\,$} -- (6.2,0); 
\draw (2.6,2) node[above,font=\footnotesize] {$2$} -- (1.3,0);
\draw (0.8,-0.5) node[above,font=\footnotesize] {$3$};
\draw (2.6,-2) node[below,font=\footnotesize] {$4$} -- (1.3,0);
\draw (4.9,-2) node[below,font=\footnotesize] {$5$} -- (2.6,-2);
\draw (6.2,0) node[above,font=\footnotesize] {$\,$} -- (4.9,-2);
\draw (6.6,-0.5) node[above,font=\footnotesize] {$6$};
\hspace{-1.2cm}
\end{tikzpicture}
\caption{ Cycle graph $C_{6}$.}\label{FIG:C6}
\end{center} 
\end{figure} 
Since the evolution $\mathbb{C}$-algebras $\AA_{C_6}$ and $\AA_{RW(C_6)}$ are perfect,  their automorphisms are all induced by symmetries of $C_6$, and from Theorem \ref{orbitas} we have that $\aut(\AA_{C_6})= \bigsqcup_{f\in \DD} O_f$.
Moreover, since $C_{6}$ is bipartite then, by Theorem \ref{D1o3} and Corollary \ref{AlgD},
$|\DD|=3$ 
and $|\aut(\AA_{RW(C_6)})|= |\aut(\AA_{C_6})|=|\DD| |\sym(C_6)|=36$, because  $|\sym(C_6)|=12$.
\end{exa}

Notice, however, that Theorem \ref{D1o3} and Corollary \ref{AlgD} imply the following immediate consequence that does not ask for a specific property of the graph to relate these automorphism groups.

\begin{corollary}\label{DandRWD}
  Let $\Gamma$ be a finite graph  and $\mathcal{D}_1 \leq \aut (\AA_{\Gamma})$ and $\mathcal{D}_2 \leq \aut (\AA_{RW(\Gamma)})$ the diagonal subgroups. Then  $\DD_1 \cong \DD_2$. 
     \end{corollary}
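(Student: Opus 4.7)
The strategy is to invoke the structural dichotomy for the diagonal subgroup already established in Theorem \ref{D1o3} and Corollary \ref{AlgD}, both of which apply uniformly to the two algebras $\AA_{\Gamma}$ and $\AA_{RW(\Gamma)}$ associated to the same graph $\Gamma$. Since neither of those results depends on whether the structure matrix is the adjacency matrix or the transition matrix, the conclusion transfers immediately to each of $\mathcal{D}_1$ and $\mathcal{D}_2$.

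The plan is as follows. First, I apply Corollary \ref{AlgD} separately to $\AA_{\Gamma}$ and to $\AA_{RW(\Gamma)}$. This yields a sharp trichotomy-to-dichotomy reduction: each of $\mathcal{D}_1$ and $\mathcal{D}_2$ is either the trivial group $\{Id\}$ or cyclic of order three, isomorphic to $\mathbb{Z}_3$. Hence it is enough to show that $\mathcal{D}_1$ is trivial if and only if $\mathcal{D}_2$ is trivial; once this is done, the two groups must lie in the same one of the two possibilities, and therefore must be isomorphic.

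Next, I invoke Theorem \ref{D1o3}: for $\AA \in \{\AA_{\Gamma},\AA_{RW(\Gamma)}\}$, the diagonal subgroup $\mathcal{D}$ of $\aut(\AA)$ is non-trivial if and only if $\Gamma$ is bipartite. Applying this criterion to the two algebras, the non-triviality of $\mathcal{D}_1$ and of $\mathcal{D}_2$ are each equivalent to the \emph{same} combinatorial condition on $\Gamma$, namely bipartiteness. Thus $\mathcal{D}_1 = \{Id\}$ if and only if $\Gamma$ is not bipartite if and only if $\mathcal{D}_2 = \{Id\}$.

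Combining the two steps: either $\Gamma$ is not bipartite, in which case $\mathcal{D}_1 = \{Id\} = \mathcal{D}_2$, or $\Gamma$ is bipartite, in which case both $\mathcal{D}_1$ and $\mathcal{D}_2$ are isomorphic to $\mathbb{Z}_3$ by Corollary \ref{AlgD}. In both cases $\mathcal{D}_1 \cong \mathcal{D}_2$, as desired. There is essentially no obstacle here; the whole content of the corollary is that Theorem \ref{D1o3} and Corollary \ref{AlgD} were proved uniformly for both families, so all the work was already absorbed into their common formulation.
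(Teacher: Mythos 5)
Your proposal is correct and follows exactly the route the paper intends: the paper presents this corollary as an immediate consequence of Theorem \ref{D1o3} and Corollary \ref{AlgD}, and your argument simply spells out that both diagonal subgroups are trivial or both are isomorphic to $\mathbb{Z}_3$ according to the single graph-theoretic condition of bipartiteness. No gaps.
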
    

\begin{exa}  \label{X}
     Let $\AA_{RW(K_{1,4})}$ be the evolution $\mathbb{C}$-algebra  associated  to the symmetric random walk  on the singular star graph $K_{1,4}$ (see Fig. \ref{FIG:K14}).
\begin{figure}[!h]
\begin{center}
\begin{tikzpicture}[scale=0.35]
\draw (3.9,0) node[above,font=\footnotesize] {$1$};
\draw (1.8,1) node[above,font=\footnotesize] {$2$} -- (3.9,0);;
\draw (5.9,1) node[above,font=\footnotesize] {$3$} -- (3.9,0);
\draw (1.8,-1) node[above,font=\footnotesize] {$\,$} -- (3.9,0);
\draw (1.8,-2.4) node[above,font=\footnotesize] {$4$};
\draw (5.9,-1) node[above,font=\footnotesize] {$\,$} -- (3.9,0);
\draw (5.9,-2.4) node[above,font=\footnotesize] {$5$}; 
\filldraw [black] (3.9,0) circle (3.5pt);
\filldraw [black] (1.8,1) circle (3.5pt);
\filldraw [black] (5.9,1) circle (3.5pt);
\filldraw [black] (1.8,-1) circle (3.5pt);
\filldraw [black] (5.9,-1) circle (3.5pt);
\hspace{-1.2cm}
\end{tikzpicture} \vspace{-0.1cm}
\caption{ Star graph $K_{1,4}$}
\label{FIG:K14}
\end{center} 
\end{figure} 
Note that $\AA_{RW(K_{1,4})}$ is not perfect and does not satisfy property 2LI. That is, even if we do not have enough hypotheses  to relate the structure of $\aut(\AA_{K_{1,4}})$ and $\aut(\AA_{RW(K_{1,4})})$, from Corollary \ref{DandRWD} we know that $\DD_1 \cong \DD_2$. Moreover, since $K_{1,4}$ is  bipartite, by Theorem \ref{D1o3} and Corollary \ref{AlgD}, we have
  $\DD_1= \{Id, h,g \}$
    with $ (1,1,1,1,1)_{[Id]_B}, \, (\alpha, \beta,\beta,\beta,\beta)_{[h]_B}$ and $ (\beta,\alpha,\alpha,\alpha,\alpha)_{[g]_B}$. 
  \end{exa}

\begin{corollary}\label{RWcorigaut1}
If  the elements of  $\aut(\AA_{\Gamma})$ and the elements of $\aut(\AA_{RW(\Gamma)})$ are all induced by  symmetries of $\Gamma$, then  $\aut(\AA_{\Gamma}) \cong  \aut(\AA_{RW(\Gamma)}).$
\end{corollary}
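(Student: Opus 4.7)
The plan is to exhibit a group isomorphism between $\aut(\AA_\Gamma)$ and $\aut(\AA_{RW(\Gamma)})$ by showing that, in the fixed natural basis $B$, both groups are represented by the \emph{same} set of matrices in $GL_n(\KK)$; the identity-on-matrices map is then automatically a group isomorphism since composition corresponds to matrix multiplication in either setting.

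First, I would use the hypothesis to write every $h$ in either automorphism group in the normal form $h(e_i)=\mu_i e_{\sigma(i)}$, with $\sigma\in\sym(\Gamma)$ and $\mu_i\in\KK^\ast$. Thus the only remaining task is to characterize, in each case, which tuples $(\sigma,\mu_1,\dots,\mu_n)$ produce an actual automorphism, and to show that the two characterizations coincide.

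Next, I would compare $h(e_i^2)$ with $h(e_i)^2$ in both algebras, using the key fact that $\sigma\in\sym(\Gamma)$ implies $\N(\sigma(i))=\sigma(\N(i))$ and in particular $d_{\sigma(i)}=d_i$. For $\AA_\Gamma$ this immediately yields the constraint $\mu_l=\mu_i^2$ for every $l\in\N(i)$. For $\AA_{RW(\Gamma)}$ the same computation produces a common factor $1/d_i$ on both sides (thanks to $d_{\sigma(i)}=d_i$), which cancels and leaves the identical equation $\mu_l=\mu_i^2$. The non-singularity condition on $[h]_B$ is purely matrix-theoretic and is therefore also the same in both cases. Consequently, the set
\begin{equation*}
\{[h]_B : h\in\aut(\AA_\Gamma)\} \;=\; \{[h]_B : h\in\aut(\AA_{RW(\Gamma)})\}
\end{equation*}
as subsets of $GL_n(\KK)$. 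Note this is consistent with, and refines, Corollary~\ref{DandRWD} on the diagonal subgroup.

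Finally, the map $\Phi\colon \aut(\AA_\Gamma)\to\aut(\AA_{RW(\Gamma)})$ sending $h$ to the unique $\tilde h$ with $[\tilde h]_B=[h]_B$ is then bijective, and since $[h_1\circ h_2]_B=[h_1]_B[h_2]_B$ holds in both algebras (the underlying $\KK$-vector space and its basis being the same), $\Phi$ is a group homomorphism. The only real obstacle is the second step: verifying that the $1/d_i$ factor in $\AA_{RW(\Gamma)}$ truly cancels on both sides of the automorphism equation. This cancellation is exactly what makes the hypothesis (all automorphisms induced by graph symmetries) essential — without it $\sigma$ could fail to preserve degrees, and the admissibility conditions for the two algebras could diverge, mirroring the fact that in general $\AA_\Gamma$ and $\AA_{RW(\Gamma)}$ need not even be isomorphic as algebras.
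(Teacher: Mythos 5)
Your proof is correct. The paper itself offers no written proof of this corollary (it is presented as an ``immediate consequence'' of \S\ref{Res}), and the route it implicitly suggests is via Theorem \ref{orbitas} together with Corollary \ref{DandRWD}: both groups decompose as $\bigsqcup_{f\in \DD}O_f$ with $\DD_1\cong\DD_2$ and each orbit isomorphic to $\sym(\Gamma)$, whence $|\aut(\AA_\Gamma)|=|\DD_1||\sym(\Gamma)|=|\DD_2||\sym(\Gamma)|=|\aut(\AA_{RW(\Gamma)})|$. That argument, taken literally, only produces a set-level bijection, since the orbit decomposition is not a group-theoretic decomposition. Your approach is genuinely different and actually stronger: by checking that for a monomial map $h(e_i)=\mu_i e_{\sigma(i)}$ with $\sigma\in\sym(\Gamma)$ the admissibility condition reduces in both algebras to the same equation $\mu_\ell=\mu_i^2$ for $\ell\in\N(i)$ (the factors $1/d_i$ and $1/d_{\sigma(i)}$ cancelling because symmetries preserve degree), you identify the two automorphism groups as literally the same subgroup of $GL_n(\KK)$ in the common basis $B$, so the identity on matrices is a group isomorphism. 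This buys an explicit isomorphism and justifies the $\cong$ of groups rather than merely matching cardinalities; the only (trivial) step left implicit is $h(e_ie_j)=0=h(e_i)h(e_j)$ for $i\neq j$, which holds because $\sigma$ is injective.
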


We will end this section with an example of a singular graph $\Gamma$ in which $\AA_{\Gamma}$ and $\AA_{RW(\Gamma)}$ are not isomorphic but their respective automorphism groups are isomorphic. Moreover, the algebra $\AA_{RW(\Gamma)}$  is  not perfect and  not {\it 2LI}, but its automorphisms are all induced by symmetries of the graph, which allows us to calculate its group of automorphisms. This example performs similar counts to Example \ref{ex:indper}, except that we work not only with structure constants 0 and 1 given by the graph adjacency matrix, but with structure constants of the graph transition matrix, which can be between 0 and 1. 

\begin{exa} \label{GraphT41}
Let $T_{4,1}$  be the tadpole graph  (see Fig. \ref{FIG:T41}).
\begin{figure}[!h]
\begin{center}
\begin{tikzpicture}[scale=0.4]

\draw (3.9,0) node[above,font=\footnotesize] {$2$} -- (5.9,1.5); 
\draw (5.9,1.5) node[above,font=\footnotesize] {$1$} -- (7.9,0);
\draw (7.9,0) node[above,font=\footnotesize] {$4$} -- (5.9,-1.7);
\draw (5.9,-1.7) node[above,font=\footnotesize] {$\,$} -- (3.9,0);
\draw (5.9,-3) node[above,font=\footnotesize] {$3$};
\draw (10.9,0) node[above,font=\footnotesize] {$5$} -- (7.9,0);
\filldraw [black] (3.9,0) circle (3.5pt);
\filldraw [black] (5.9,1.5) circle (3.5pt);
\filldraw [black] (7.9,0) circle (3.5pt);
\filldraw [black] (5.9,-1.7) circle (3.5pt);
\filldraw [black] (10.9,0) circle (3.5pt);
\hspace{-1.2cm}
\end{tikzpicture}
\caption{ Tadpole graph $T_{4,1}$.}\label{FIG:T41}
\end{center} 
\end{figure}  
The evolution $\mathbb{C}$-algebra $\AA_{RW(T_{4,1})}$, with natural basis $B=\{e_i\}_{i=1}^5$, is neither perfect nor  2LI.
From the graph, we conclude that  $\sym(T_{4,1})=\{\rho_0, \rho_1\} \leq S_5$, with $\rho_0=(1)(2)(3)(4)(5)$ and $\rho_1=(13)(2)(4)(5)$.  

Let   $f \in \aut(\AA_{RW(T_{4,1})})$ and $[f]_{B} =(t_{ik})_{i,k=1}^n$ its representation matrix in the basis $B$. Since $f$ is an automorphism, the matrix has no null rows or  columns. We want to know the possibilities for $t_{ik} \neq 0$, and for this we will use the equations in Proposition  \ref{eqayba1a1} and  Corollary \ref{GemyColg} repeatedly. 
For $\ell=5$ in   \eqref{colg1}:   \begin{equation} \label{eq:tp1}
        t_{i4}t_{j4}=0 \hspace{0.2cm} \textrm{for all} \hspace{0.2cm} i\neq j,
    \end{equation}
hence, and  from $r=1$ in \eqref{RWeq:propa2}: 
    \begin{equation} \label{eq:tp2}
       t_{i2}t_{j2}=0 \hspace{0.5cm} \textrm{for all} \hspace{0.5cm} i\neq j. 
    \end{equation}
Also in \eqref{RWeq:propa2}, for $r=2,4:$
\begin{equation} \label{eq:tp13}
        \frac{1}{2}t_{i1}t_{j1}+\frac{1}{2}t_{i3}t_{j3}=0 \hspace{0.5cm} \textrm{and} \hspace{0.5cm} \frac{1}{2}t_{i1}t_{j1}+\frac{1}{2}t_{i3}t_{j3}+t_{i5}t_{j5}=0, 
    \end{equation}
then
\begin{equation}    \label{eq:tp4}
t_{i5}t_{j5}=0, 
         \hspace{0.5cm} \textrm{for all} \hspace{0.5cm} i\neq j.
         \end{equation}
It follows by \eqref{eq:tp1}, \eqref{eq:tp2} and \eqref{eq:tp4} that,
    \begin{equation} \label{eq:tp5}
     t_{i k}\neq 0, \hspace{0.2cm} \textrm{for exactly one} \hspace{0.2cm} i\in V, \hspace{0.2cm} \textrm{when} \hspace{0.2cm} k=2,4,5. \end{equation}
    
    On the other hand,  since $\mathcal{N}(1)=\mathcal{N}(3)$,   for $i=5$ and  $i=3$ in \eqref{gemelos1} we obtain: 
 \begin{equation} \label{eq:tp6}t_{41}=t_{43} \hspace{0.3cm} \textrm{and} \hspace{0.3cm}  t_{21}=t_{23},\end{equation}
 and 
 if $i=2$ and  $i=4:$
 \begin{equation} \label{eq:tp7}
 t_{11}+t_{31}=t_{13}+t_{33} \hspace{0.3cm} \textrm{and} \hspace{0.3cm}  t_{51}=t_{53}.\end{equation}
Now,  in \eqref{gemelos2},  if $i=5$ and  $i=1:$
 \begin{equation} \label{eq:tp8}t_{14}^2=t_{34}^2 \hspace{0.3cm} \textrm{and} \hspace{0.3cm}  t_{12}^2=t_{32}^2,\end{equation}
 and 
 if $i=2$ and  $i=4:$
 \begin{equation} \label{eq:tp9}
 t_{11}^2+t_{13}^2=t_{31}^2+t_{33}^3 \hspace{0.3cm} \textrm{and} \hspace{0.3cm}  t_{15}^2=t_{35}^2.\end{equation}
Hence, from \eqref{eq:tp5},  \eqref{eq:tp8} and \eqref{eq:tp9} we obtain:
\begin{equation} \label{tpceros1} t_{12}=t_{32}=t_{14}=t_{34}=t_{15}=t_{35}=0. 
\end{equation}
Then, in \eqref{RWeq:propb2},  for $i=2$ and $r=5: \;$
$\frac{1}{3} t_{24}^2=\frac{1}{2}(t_{15}+t_{35})=0, \textrm{ and }  t_{24}=0.$
Analogously, for $i=1$ and $r=5: \;$
$
\frac{1}{2}(t_{25}+t_{45})=\frac{1}{3} t_{14}^2=0, \textrm{ then from \eqref{eq:tp5}: } t_{25}=t_{45}=0.
$ Hence, for $i=r=5: \;$ $ \frac{1}{3}t_{54}^2=t_{45}=0.
$ Compiling these information, we have:
\begin{equation} \label{tpceros2} t_{25}=t_{45}=t_{24}=t_{54}=0 .\end{equation}
This in \eqref{RWeq:propb2}, for $i=r=4$: 
$
t_{41}^2=0 \textrm{ then, from }
 \eqref{eq:tp6}:  t_{41}=t_{43}=0.
$

 For $i=5$ and $r=1:$  $t_{52}^2=0$,  for $i=r=1: t_{21}=0$, for  $i=r=3: t_{23}=0$,  so
\begin{equation} \label{tpceros3} t_{21}=t_{41}=t_{23}=t_{43}=t_{52}=0.\end{equation}
Since $f$ is an automorphism, from \eqref{tpceros1}, \eqref{tpceros2} and \eqref{tpceros3} we conclude that:
\begin{equation}\label{no-cero}
 t_{ii}\neq 0 \textrm{ for } i=2,4,5,
 \end{equation}
  thus, from \eqref{eq:tp5}:  $t_{42}=t_{52}=0$.
  This, together with \eqref{eq:tp7},  in \eqref{RWeq:propb2} for $i=5$ and $r=2:$ $t_{51}=t_{53}=0$.
 Compiling these information, we have: 
\begin{equation} \label{tpceros4} t_{42}=t_{52}=t_{51}=t_{53}=0.\end{equation}
From \eqref{tpceros1},\eqref{tpceros2},\eqref{tpceros3}, \eqref{no-cero} and \eqref{tpceros4},  we notice that $t_{11},t_{13},t_{31},t_{33}$  we do not know yet and that the other entries are all equal to zero except $t_{22},t_{44}$ and $t_{55}$.
This information,  in \eqref{RWeq:propb2} for $i=1,r=2$ and $i=3,r=2$, gives:
\begin{equation*} \label{eq:tp11}
        t_{11}^2+ t_{13}^2=t_{22} \hspace{0.5cm} \textrm{and} \hspace{0.5cm}  t_{31}^2+t_{33}^2=t_{22}, 
    \end{equation*}
 and,   for $i=2,r=1$ and $i=2,r=3$, gives:  
\begin{equation*} \label{eq:tp22}
        t_{22}^2=t_{11}+t_{31} \hspace{0.5cm} \textrm{and} \hspace{0.5cm}  t_{22}^2=t_{13}+t_{33}. 
    \end{equation*}
The last four equations are equal to  the equations in \eqref{eq:t22}, \eqref{eq:t11}  in the Example \ref{ex:indper}, as well as  the first  equation in \eqref{eq:tp13} with the equation  \eqref{eq:t13}, in the same example. 
Then, with the same arguments used in Example \ref{ex:indper}, we obtain that
 every $f \in \aut(\AA_{RW(T_{4,1})})$ is induced by a symmetry of $T_{4,1}$.
 
 Since $T_{4,1}$ is a bipartite graph, then 
$ \DD_2 \neq \{Id\}$ and   $\DD_2= \{Id, h,g \}$
   with $ (1,1,1,1,1)_{[Id]_B}, $ $\, (\alpha, \beta,\alpha,\beta,\alpha)_{[h]_B}$ and $ (\beta,\alpha,\beta,\alpha,\beta)_{[g]_B}$. 
   Moreoever, 
   $O_{Id}=\{Id_{\langle \rho_0 \rangle}, Id_{\langle \rho_1 \rangle }\},$ $O_{h}=\{h_{\langle \rho_0 \rangle}, h_{\langle \rho_1 \rangle }\},$ and $O_{g}=\{g_{\langle \rho_0 \rangle}, g_{\langle \rho_1 \rangle }\} ,$ 
   then from Proposition \ref{orbitas}: 
   $$\aut(\AA_{RW(T_{4,1})})=O_{Id} \cup O_{h} \cup O_{g}= \vspace{-0.4cm}$$
       \begin{small}
   \begin{eqnarray*}
      \left\{\begin{pmatrix}   
 1 &\hspace{-5pt} 0 &\hspace{-5pt} 0 &\hspace{-5pt} 0 &\hspace{-5pt} 0   \\ 0 &\hspace{-5pt}  1 &\hspace{-5pt} 0 &\hspace{-5pt} 0 &\hspace{-5pt} 0    \\ 0 &\hspace{-5pt} 0 &\hspace{-5pt} 1 &\hspace{-5pt} 0 &\hspace{-5pt} 0  \\ 0 &\hspace{-5pt}  0 &\hspace{-5pt} 0 &\hspace{-5pt} 1 &\hspace{-5pt} 0    \\ 0 &\hspace{-5pt} 0 &\hspace{-5pt} 0 &\hspace{-5pt} 0 &\hspace{-5pt} 1     %
   \end{pmatrix} ,  \begin{pmatrix}   
 0 &\hspace{-5pt} 0 &\hspace{-5pt} 1 &\hspace{-5pt} 0 &\hspace{-5pt} 0   \\ 0 &\hspace{-5pt}  1 &\hspace{-5pt} 0 &\hspace{-5pt} 0 &\hspace{-5pt} 0    \\ 1 &\hspace{-5pt} 0 &\hspace{-5pt} 0 &\hspace{-5pt} 0 &\hspace{-5pt} 0  \\ 0 &\hspace{-5pt}  0 &\hspace{-5pt} 0 &\hspace{-5pt} 1 &\hspace{-5pt} 0    \\ 0 &\hspace{-5pt} 0 &\hspace{-5pt} 0 &\hspace{-5pt} 0 &\hspace{-5pt} 1     %
   \end{pmatrix} , \begin{pmatrix}   
 \alpha &\hspace{-5pt} 0 &\hspace{-5pt} 0 &\hspace{-5pt} 0 &\hspace{-5pt} 0   \\ 0 &\hspace{-5pt}  \beta &\hspace{-5pt} 0 &\hspace{-5pt} 0 &\hspace{-5pt} 0    \\ 0 &\hspace{-5pt} 0 &\hspace{-5pt} \alpha &\hspace{-5pt} 0 &\hspace{-5pt} 0  \\ 0 &\hspace{-5pt}  0 &\hspace{-5pt} 0 &\hspace{-5pt} \beta &\hspace{-5pt} 0    \\ 0 &\hspace{-5pt} 0 &\hspace{-5pt} 0 &\hspace{-5pt} 0 &\hspace{-5pt} \alpha     %
   \end{pmatrix} ,  \begin{pmatrix}   
 0 &\hspace{-5pt} 0 &\hspace{-5pt} \alpha &\hspace{-5pt} 0 &\hspace{-5pt} 0   \\ 0 &\hspace{-5pt}  \beta &\hspace{-5pt} 0 &\hspace{-5pt} 0 &\hspace{-5pt} 0    \\ \alpha &\hspace{-5pt} 0 &\hspace{-5pt} 0 &\hspace{-5pt} 0 &\hspace{-5pt} 0  \\ 0 &\hspace{-5pt}  0 &\hspace{-5pt} 0 &\hspace{-5pt} \beta &\hspace{-5pt} 0    \\ 0 &\hspace{-5pt} 0 &\hspace{-5pt} 0 &\hspace{-5pt} 0 &\hspace{-5pt} \alpha     %
   \end{pmatrix},   \begin{pmatrix}   
 \beta &\hspace{-5pt} 0 &\hspace{-5pt} 0 &\hspace{-5pt} 0 &\hspace{-5pt} 0   \\ 0 &\hspace{-5pt}  \alpha &\hspace{-5pt} 0 &\hspace{-5pt} 0 &\hspace{-5pt} 0    \\ 0 &\hspace{-5pt} 0 &\hspace{-5pt} \beta &\hspace{-5pt} 0 &\hspace{-5pt} 0  \\ 0 &\hspace{-5pt}  0 &\hspace{-5pt} 0 &\hspace{-5pt} \alpha &\hspace{-5pt} 0    \\ 0 &\hspace{-5pt} 0 &\hspace{-5pt} 0 &\hspace{-5pt} 0 &\hspace{-5pt} \beta     %
   \end{pmatrix} ,    \begin{pmatrix}   
 0 &\hspace{-5pt} 0 &\hspace{-5pt} \beta &\hspace{-5pt} 0 &\hspace{-5pt} 0   \\ 0 &\hspace{-5pt}  \alpha &\hspace{-5pt} 0 &\hspace{-5pt} 0 &\hspace{-5pt} 0    \\ \beta &\hspace{-5pt} 0 &\hspace{-5pt} 0 &\hspace{-5pt} 0 &\hspace{-5pt} 0  \\ 0 &\hspace{-5pt}  0 &\hspace{-5pt} 0 &\hspace{-5pt} \alpha &\hspace{-5pt} 0    \\ 0 &\hspace{-5pt} 0 &\hspace{-5pt} 0 &\hspace{-5pt} 0 &\hspace{-5pt} \beta     %
   \end{pmatrix}  \right\}.
   \end{eqnarray*}
   \end{small}
   
 By Corollary \ref{DandRWD},  $\DD_1 \cong \DD_2$. 
   Since the same calculations made for $f \in \aut(\AA_{RW(T_{4,1})})$  apply for $f_1 \in \aut(\AA_{T_{4,1}})$, we  conclude that all $f_1 \in \aut(\AA_{T_{4,1}})$ is induced by a symmetry of $T_{4,1}$. Then, by Corollary \ref{RWcorigaut1}, we have
   $\aut(\AA_{RW(T_{4,1})}) \cong \aut(\AA_{T_{4,1}})$.
 \end{exa}

  \begin{remark}
  Notice that, if in Example \ref{GraphT41}, we consider the algebras on the field $\mathbb{R}$ then $\DD= \{Id\}$ and $O_{Id}=\{Id_{\langle \rho_0 \rangle}, Id_{\langle \rho_1 \rangle }\}$.  
Thus, with the same calculation in Example \ref{GraphT41}, we show   that   $\aut(\AA_{RW(T_{4,1})})=O_{Id}$ and  $\aut(\AA_{RW(T_{4,1})}) \cong \aut(\AA_{T_{4,1}})$, although in \cite[Proposition 3.3]{PMPS} it has been shown that $\AA_{T_{4,1}} \ncong \AA_{RW(T_{4,1})}.$
    \end{remark}
\section*{Acknowledgments}
Part of this research was carried out during a visit by  M. L. Rodi\~no Montoya to the Universidade Federal de S\~ao Carlos (UFSCar) and during a visit by  N. Viana Bedoya to the Universidad de Antioquia (UdeA). The authors extend their gratitude to this institutions for their hospitality and support. This work was supported by the  Funda\c{c}\~ao de Amparo \`a Pesquisa do Estado de S\~ao Paulo - FAPESP (Grant 2022/08948-2).

\end{document}